\newtheoremstyle{example}{\topsep}{\topsep}%
     {}
     {}
     {\rmfamily}
     {}
     {\newline}
     {\thmname{#1}\thmnumber{ #2}\thmnote{ #3}}
   \theoremstyle{example}
\numberwithin{equation}{section}
\theoremstyle{plain}
\newtheorem{prop}{Proposition}[section]
\newtheorem{rem}{Remark}[section]
\newtheorem{cor}{Corollary}[section]
\newcommand{\Lower}[2]{\smash{\lower #1 \hbox{#2}}}
\newcommand{\ben}{\begin{enumerate}}
\newcommand{\een}{\end{enumerate}}
\newcommand{\bi}{\begin{itemize}}
\newcommand{\ei}{\end{itemize}}
\begin{document}

\begin{frontmatter}
\title{Generalized Mittag Leffler distributions arising as limits in preferential attachment models\protect} \runtitle{Mittag Leffler limits}

\begin{aug}
\author{\fnms{Lancelot F.} \snm{James}\thanksref{t1}\ead[label=e1]{lancelot@ust.hk}}

\thankstext{t1}{Supported in
part by the grant RGC-HKUST 601712 of the HKSAR.}

\runauthor{Lancelot F. James}

\affiliation{Hong Kong University of Science and Technology}

\address[a]{Lancelot F. James\\ The Hong Kong University of Science and
Technology, \\Department of Information Systems, Business Statistics and Operations Management,\\
Clear Water Bay, Kowloon, Hong Kong.\\ \printead{e1}.}

\contributor{James, Lancelot F.}{Hong Kong University of Science
and Technology}

\end{aug}

\begin{abstract}
For $0<\alpha<1,$ and $\theta>-\alpha,$ let $(S^{-\alpha}_{\alpha,\theta+r})_{\{r\ge 0\}}$ denote an increasing(decreasing) sequence of variables forming a time inhomogeneous Markov chain whose marginal distributions are equivalent to generalized Mittag Leffler distributions. 
We exploit the property that such a sequence may be connected with the two parameter $(\alpha,\theta)$ family of 
Poisson Dirichlet distributions with law $\mathrm{PD}(\alpha,\theta)$. We demonstrate that the sequences serve as limits in certain types of preferential attachment models. As one illustrative application, we describe the explicit joint limiting distribution of scaled degree sequences arising under a class of linear weighted preferential attachment models as treated in Mori\cite{Mori}, with weight $\beta>-1.$ 
When $\beta=0$ this corresponds to the Barbasi-Albert preferential attachment model. 
We are in fact primarily interested in distributional properties of $(S^{-\alpha}_{\alpha,\theta+r})_{\{r\ge 0\}}$ and related quantities arising in more intricate exchangeable sampling mechanisms, with direct links to nested mass partitions governed by $\mathrm{PD}(\alpha,\theta).$ We construct sequences of nested $(\alpha,\theta)$ Chinese restaurant partitions of $[n]$. From this, we identify and analyze relevant quantities that may be thought of as mimics for vectors of degree sequences, or differences in tree lengths. We also describe connections to a wide class of continuous time coalescent  processes that can be seen as a variation of stochastic flows of bridges related to generalized Fleming-Viot models.
Under a change of measure our results suggest the possibilities for identification of limiting distributions related to consistent families of nested Gibbs partitions of $[n]$ that would otherwise be difficult by methods using moments or Laplace transforms. In this regard, we focus on special simplifications obtained in the case of $\alpha=1/2.$ That is to say, limits derived from a  $\mathrm{PD}(1/2|t)$ distribution. Throughout we present some distributional results that are relevant to various settings. We close by describing nested schemes varying in $(\alpha,r).$ 
\end{abstract}

\begin{keyword}[class=AMS]
\kwd[Primary ]{60C05, 60G09} \kwd[; secondary ]{60G57,60E99}
\end{keyword}

\begin{keyword}
\kwd{Coagulation-Fragmentation Duality, Mittag Leffler distributions, Preferential attachment models, Pitman-Yor processes, Recursive trees}
\end{keyword}

\end{frontmatter}
\section{Introduction}
For each $0<\alpha<1,$ let $S_{\alpha}$ denote a random variable whose law coincides with a positive stable random variable with index $\alpha$ specified by its Laplace transform $\mathbb{E}[{\mbox e}^{-\omega S_{\alpha}}]={\mbox e}^{-\omega^{\alpha}}$ and density denoted as $f_{\alpha}(t).$
Now define the variables $S_{\alpha,\theta}$ for each $\theta>-\alpha,$ as having a density, denoted by $f_{\alpha,\theta},$ formed by polynomially tilting a stable density as follows
\begin{equation}
f_{\alpha,\theta}(t)=c_{\alpha,\theta}t^{-\theta}f_{\alpha}(t)
\label{PDdiversity}
\end{equation}
where
$
c_{\alpha,\theta}:=\Gamma(\theta+1)/\Gamma(\theta/\alpha+1),
$
and satisfies for $\delta+\theta>-\alpha$
\begin{equation}
\mathbb{E}[S^{-\delta}_{\alpha,\theta}]=\frac{\Gamma(\theta+1)}{\Gamma(\theta/\alpha+1)}
\mathbb{E}[S^{-(\delta+\theta)}_{\alpha}]=\frac{\Gamma(\frac{(\theta+\delta)}{\alpha}+1)}{\Gamma({\theta+\delta}+1)}\frac{\Gamma(\theta+1)}{\Gamma(\theta/\alpha+1)}.
\label{moment}
\end{equation}
The variable $S^{-\alpha}_{\alpha}\overset{d}=S^{-\alpha}_{\alpha,0}$
is often referred to as having a Mittag-Leffler distribution, hence it is natural to consider $S^{-\alpha}_{\alpha,\theta}$ as generalized Mittag-Leffler variables. In terms of combinatorial objects, versions of such variables arise as limits in the two parameter Poisson Dirichlet framework of~\cite{PY97} as follows: 
Let $(P_{k})_{\{k\ge 1\}}$ be the collection of ranked probability masses summing to $1,$ whose law, denoted as $\mathrm{PD}(\alpha,\theta),$ follows a Poisson-Dirichlet distribution with parameters $(\alpha,\theta)$ as described in~Pitman and Yor~\cite{PY97} and Pitman\cite{Pit02,Pit06}. From those works, it follows that letting  $K_{n}$ denote the number of blocks in a $\mathrm{PD}(\alpha,\theta)$ partition of $[n]={\{1,2,\ldots,n\}},$ that is to say the well-known two parameter $(\alpha,\theta)$ Chinese restaurant process, then as 
$n\rightarrow \infty,$  $n^{-\alpha}K_{n}\rightarrow S^{-\alpha}_{\alpha,\theta},$ almost surely. There are of course other known asymptotic results involving the number of blocks of a certain size etc. Following\cite{PPY92,Pit06,PY97},  a version of $S^{-\alpha}_{\alpha,\theta}$ may be interpreted in terms of the local time up to time $1$ of a generalized Bessel process. The following relation shows that $S_{\alpha,\theta}$ is a measurable function of the $(P_{k})\sim \mathrm{PD}(\alpha,\theta)$ which makes sense of conditioning $(P_{k})|S_{\alpha,\theta}=t;$
$
S_{\alpha,\theta}:=\lim_{i\rightarrow \infty}{(i\Gamma(1-\alpha)P_{i})}^{-1/\alpha},$
almost surely. For general $\alpha,$ they also arise in various P\'olya urn and random tree growth models as described in for instance in~\cite{Haas,SvanteUrn}. There are however instances where the limit is not recognized.

We also note that $S^{-1/2}_{1/2,\theta}\overset{d}=2G^{1/2}_{\theta+1/2},$ where $G_{\theta+1/2}$ denotes a $\mathrm{Gamma}(\theta+1/2,1)$ variable. Random variables that are powers of gamma variables have played a key role in recent work by~\cite{Bubeck, Pek2013,Pek2014,Pek2015,PitmanR}. One can surmise that such results for $\alpha=1/2$ can possibly be adapted for the general $\alpha$ case.
Indeed, the works of 
\cite{Chaumont,DevroyeGen, DevroyeJames2, JamesLamperti, JamesPGarxiv, PY97} have shown that $S_{\alpha,\theta}$ satisfies many interesting distributional identities demonstrating a notion of a beta-gamma-stable algebra. However, for example, one important problem considered in Pek\"oz, R\"ollin and Ross \cite{Pek2014} are results related to the scaled limiting distribution of the joint degree distribution of linearly weighted variations of the Barabasi-Albert~\cite{Barabasi} preferential attachment model. This requires quite specific information about the joint distributional behavior between variables in the limit. 
This is considerably more challenging in the general $\alpha$ setting.
Although of interest, our aim in this present setting is not to mimic or adapt the methods in\cite{Pek2013,Pek2014,Pek2015}, but rather to provide more details and indicators on what types of limits might arise in various models, in $n,$ having random limits when scaled by $n^{\alpha}$.

Let $B_{a,b}$ denote a $\mathrm{Beta}(a,b)$ variable. James~\cite{JamesPGarxiv} notes there are versions of the generalized Mittag Leffler variables that satisfy the following exact equality for any $\theta>-\alpha,$
\begin{equation}
S^{-\alpha}_{\alpha,\theta}=S^{-\alpha}_{\alpha,\theta+\alpha}B^{\alpha}_{(\theta+\alpha,1-\alpha)}=S^{-\alpha}_{\alpha,\theta+1}B_{(\frac{\theta+\alpha}{\alpha},\frac{1-\alpha}{\alpha})}
\label{keyID1}
\end{equation}
where $S^{-\alpha}_{\alpha,\theta+\alpha}$ is independent of $B_{(\theta+\alpha,1-\alpha)}$
and $B_{(\frac{\theta+\alpha}{\alpha},\frac{1-\alpha}{\alpha})}$ is independent of $S_{\alpha,\theta+1}.$ [See \cite[eq. (2.11)]{JamesLamperti} for an in distribution version of this result applied to a wider range of parameters].
By recursion, this leads to two sequences forming Markov chains. For an integer $r\ge 0$ an interpretation, in terms of size biased deletion of excursion intervals of certain generalized Bessel bridges, of the first such sequence  $(S^{-\alpha}_{\alpha,\theta+r\alpha})_{r\ge 0\}},$ as is well known, may be read from Perman, Pitman and Yor~\cite[Corollary 3.15]{PPY92}.  Perhaps more simply, the sequence represents a dual Markov chain corresponding to the operation of size biased deletion and insertion as described in Pitman and Yor\cite[Proposition 34 and 35]{PY97}. The sequence 
encodes such operations relative to a nested family of $(\mathrm{PD}(\alpha,\theta+r\alpha))_{r\ge 0}$ distributions. Reading for $r$ increasing describes the states relative to a deletion operation.

Our primary interest is in the second sequence $(S^{-\alpha}_{\alpha,\theta+r})_{\{r\ge 0\}}$ which encodes  Markov chains for the following family of distributions $(\mathrm{PD}(\alpha,\theta+r))_{r\ge 0}.$ Specifically this is encoded by the recursion formed from the equality,
\begin{equation}
S^{-\alpha}_{\alpha,\theta}=S^{-\alpha}_{\alpha,\theta+1}B_{(\frac{\theta+\alpha}{\alpha},\frac{1-\alpha}{\alpha})}.
\label{keyID2}
\end{equation}

The family can be seen to coincide with discrete dual fragmentation coagulation operations described in~\cite{BertoinGoldschmidt2004, Dong2006}, although the particular role of the sequence $(S^{-\alpha}_{\alpha,\theta+r})_{\{r\ge 0\}}$ is not emphasized.
These authors, as mentioned in \cite[Remarks p.1712]{Dong2006}, do cite relations of their constructions to random recursive trees and other trees and graphs constructed under preferential attachment. One could say that $(S^{-\alpha}_{\alpha,\theta+r})_{\{r\ge 0\}}$ is a family of generalized Mittag Leffler distributions under a $\mathrm{PD}(\alpha,\theta)$ discrete coagulation or fragmentation regime. We shall simply refer to $(S^{-\alpha}_{\alpha,\theta+r})_{\{r\ge 0\}}$ as a $\mathrm{PD}(\alpha,\theta)$ sequence. 
The states  $(\mathrm{PD}(\alpha,\theta+r))_{r\ge 0},$ read for $r$ increasing correspond to  fragmentation schemes.

The $\mathrm{PD}(\alpha,1-\alpha)$ case arises in~Haas, Miermont, Pitman and Winkel~\cite{Haas}.
There the sequence $(S^{-\alpha}_{\alpha,r+1-\alpha})_{\{r\ge 0\}}$ is interpreted as increasing lengths of nested families of trees.
The general  Markov chain associated with
 $(S^{-\alpha}_{\alpha,\theta+r})_{\{r\ge 0\}}$ subject to a change of measure, is presented in James~
\cite{JamesPGarxiv} which we shall reproduce here.
One could also deduce this from~\cite[Proposition 18]{Haas}, by a change of measure, since their result involves any $\alpha.$ 
Subsequent to these,  the Markov chain based on constructions in~
Haas and Goldschmidt~\cite{GoldHaas} involves a $\mathrm{PD}(\alpha,\alpha)$ sequence for $\alpha\leq 1/2.$ We note further that the M and L preferential attachment models in~\cite{Pek2014} correspond to the case of $\mathrm{PD}(1/2,0)$ and $\mathrm{PD}(1/2,1/2),$ respectively.
 
Both sets of Markov chains are well defined when conditioned on $S_{\alpha,\theta}=t,$
leading to sequences governed by a $\mathrm{PD}(\alpha|t),   $ law as defined in Pitman~\cite{Pit02,Pit06}. That is if $(P_{k,0})_{\{k\ge 1\}}$ is the mass partition having law $\mathrm{PD}(\alpha,\theta),$ then its conditional distribution is $\mathrm{PD}(\alpha|t),$ the distribution of the families $((P_{k,r})_{k\ge 1})_{\{r\ge 1\}}$ in the Markov chain are then determined by transition rules of known form. The $\mathrm{PD}(1/2|t)$ case has special cancellation properties, which under the regime of the first sequence  $(S^{-1/2}_{1/2,\theta+r/2})_{\{r\ge 0\}},$ 
translates into constructions for the standard additive coalescent and Brownian fragmentation processes in~\cite{AldousPit,BerBrown, Pit02,Pit06}.
Here we shall present some details for the $\mathrm{PD}(1/2|t)$ sequence under the second regime.

For general $\alpha,$ these conditioning arguments now allow one to mix $t$ relative to any non-negative distribution, where the mixing distribution can be expressed as $h(t)f_{\alpha}(t)$ for any non-negative function $h(t),$ such that $\mathbb{E}[S_{\alpha}]=1.$ Thus  we say $(P_{k})_{\{k\ge 1\}}$ has the distribution $\mathrm{PK}_{\alpha}(h\cdot f_{\alpha}),$ if 
$\mathrm{PK}_{\alpha}(h\cdot f_{\alpha})=\int_{0}^{\infty}\mathrm{PD}(\alpha|t)h(t)f_{\alpha}(t)dt.$
This distribution is a Poisson-Kingman distribution based on a stable subordinator with mixing distribution $h(t)f_{\alpha}(t)$ as defined in Pitman~\cite{Pit02,Pit06}. $\mathrm{PD}(\alpha,\theta)$ arises by choosing $h(t)=t^{-\theta}c_{\alpha,\theta}.$ 

\subsection{Outline}
 The paper will now progress as follows. In section 2 we will present a detailed description of the limiting joint degree distribution of preferential attachment models considered by Mori~\cite{Mori} and others. In section 3, we will present a formal description of the pertinent Markov chain under general $\mathrm{PK}_{\alpha}(h\cdot f_{\alpha})$ distributions models. This parallels Pitman, Perman and Yor~\cite[Theorem 2.1]{PPY92} in the $\alpha$-stable setting. Section 4 describes nested families of random partitions of $[n]$ determined by a $\mathrm{PD}(\alpha,\theta)$ sequence of Chinese restaurant processes. We introduce and obtain some distributional results for an interesting class of variables, in $n,$ $(\xi_{n,0},\ldots,\xi_{n,r}).$ These can be seen as mimics for vectors of degree sequences. We further describe some joint limits where the idea is one can then map to various constructions of trees and graphs.  By a change of measure these results can extended to any $\mathrm{PK}_{\alpha}(h\cdot f_{\alpha}).$ Hence, this allows one to describe limits for models based on nested sequences of general Gibbs partitions of $[n],$~\cite{GnedinPitmanI,Pit02,Pit06}. We present relevant calculations for this general setting in section 4.2. We partially view these contributions as helping to provide a blueprint for construction of models having more flexible properties as demonstrated by their limiting distributions. We would add that it seems quite unlikely that one would be able to characterize(recognize) such limits by the usual methods. In Section 5, in terms of practical implementation, we can consider all possibilities in the $\alpha=1/2$ case by obtaining explicit results for $\mathrm{PD}(1/2|t).$  Section 6 describes how to further embed/nest $\mathrm{PD}(\alpha\delta,\theta)$ nested schemes into $\mathrm{PD}(\alpha,\theta)$ nested schemes for any $0<\delta<1,$ which in some sense offers a coalescent version of the results of \cite{CurienHaas}.

\begin{rem} More details and related results in terms of basic properties of $(S^{-\alpha}_{\alpha,\theta+r})_{\{r\ge 0\}}$ are discussed in the unpublished manuscript of James\cite{JamesPGarxiv}. See there for connections to models where  $h(t)f_{\alpha}(t)=[\zeta^{1-1/\alpha}/\alpha] t {\mbox e}^{-t\zeta^{1/\alpha}}{\mbox e}^{\zeta}f_{\alpha}(t).$ The entire range of $\mathrm{PD}(\alpha,\theta),$ for $\theta>-\alpha,$ is obtained by randomizing $\zeta$ to have a $\mathrm{Gamma}(\frac{\theta+\alpha}{\alpha},1)$ distribution.
\end{rem}
\section{The explict joint degree distribution of a class of linearly weighted graph and $\beta$-recursive tree preferential attachment models}
We now describe the limiting joint degree distribution of the linearly weighted preferential attachment graph model obtained in~Mori\cite{Mori}. 
[See Athreya, Ghosh and Sethuraman~\cite{Athreya} for a more general extension]. Bertoin and Uribe-Bravo\cite{BerUribe}, note that the model of \cite{Mori} is equivalent to certain recursive tree models 
as discussed for instance in Devroye~\cite[section 5]{DevroyeBranch}, As such, we lift the description of the scale free tree construction given in \cite{BerUribe}.

Fix $\beta>-1,$ 
and start for $n = 1$ from the unique tree $\mathbb{T}_{1}$ on
$\{0, 1\}$ which has a single edge connecting $0$ and $1$. Then suppose that $\mathbb{T}_{n}$ has been constructed for some $n\ge 1$, and for every $i\in {\{0,1,\ldots,n\}},$ denote by $d_{n}(i)$ the degree
of the vertex $i\in\mathbb{T}_{n}$ Conditionally given $\mathbb{T}_{n}$, the tree 
$\mathbb{T}_{n+1}$ is derived from $\mathbb{T}_{n}$ by incorporating the new vertex $n+1$ and creating an edge between $n+1$ and a vertex
$v_{n}\in\mathbb{T}_{n}$ chosen at random according to the law
$$
\mathbb{P}(v_{n}=i|\mathbb{T}_{n})=\frac{d_{n}(i)+\beta}{2n+\beta(n+1)}{\mbox { for }}i\in {\{0,1,\ldots,n\}}
$$
For reference we shall call these models 
$\beta-$\textit{recursive trees.}

Let $\overset{a.s}\rightarrow$ denote convergence almost surely.
From \cite{Athreya,Mori} one has for any $r\ge 0,$ that as $n\rightarrow \infty$ the  joint vector 
\begin{equation}
{n^{-\frac{1}{(2+\beta)}}}(d_{n}(0),d_{n}(1),\ldots,d_{n}(r))\overset{a.s}\rightarrow (\xi_{0},\xi_{1},\ldots,\xi_{r}),
\label{Morivector}
\end{equation}
where $(\xi_{0},\ldots,\xi_{r})$ has joint moments specified in Mori~\cite{Mori}. Furthermore an important result for the scaled maximal degree is obtained
$$
{n^{-\frac{1}{(2+\beta)}}}\max_{i\ge 0}d_{n}(i)
\overset{a.s}\rightarrow \max_{i\ge 0}\xi_{i}
$$
See also Durrett~\cite{Dgraphs} and van der Hofstad~\cite[Section 8]{Hofstad}. These results correspond to the model of Barabasi and Albert~\cite{Barabasi} when $\beta=0.$

We will now show this model corresponds to \textit{components} of a  $\mathrm{PD}(\alpha,1-2\alpha)$ distribution.  From~(\ref{PDdiversity}) and (\ref{moment}), for each fixed $j\ge 1,$ the density of $S_{\alpha,j-2\alpha}$ is given by $f_{\alpha,j-2\alpha}(t),$ and furthermore 
$$
\mathbb{E}[S^{-k\alpha}_{\alpha,j-2\alpha}]
=\frac{\Gamma(\frac{j-\alpha}{\alpha}+k)\Gamma(j+1-2\alpha)}
{\Gamma(\frac{j-\alpha}{\alpha})\Gamma(j+1-2\alpha+k\alpha)}.
$$
\begin{prop}\label{Moriprop}Set $\beta\alpha=1-2\alpha>-\alpha,$ 
and let $(S^{-\alpha}_{\alpha,r+1-2\alpha})_{\{r\ge 0\}}$ denote the sequence of $\mathrm{PD(}\alpha,1-2\alpha)$ $\alpha$-diversities satisfying the recursive identity
\begin{equation}
S^{-\alpha}_{\alpha,j-2\alpha}=S^{-\alpha}_{\alpha,j+1-2\alpha}B_{j}
\label{Moridegreerecursion}
\end{equation}
where $B_{j}=S^{-\alpha}_{\alpha,j-2\alpha}/S^{-\alpha}_{\alpha,j+1-2\alpha}$ are mutually independent $\mathrm{Beta}(\frac{j-\alpha}{\alpha},\frac{1-\alpha}{\alpha})$ random  variables. Furthermore $(B_{1},\ldots,B_{j})$ is independent of $S^{-\alpha}_{\alpha,\ell-2\alpha}$ for $\ell> j.$ 
\begin{enumerate}
\item[(i)]
Then for every integer $r\ge 0,$ the joint distribution of the sequence 
\begin{equation}
(S^{-\alpha}_{\alpha,1-2\alpha},S^{-\alpha}_{\alpha,2-2\alpha}-S^{-\alpha}_{\alpha,1-2\alpha},\ldots, S^{-\alpha}_{\alpha,r+1-2\alpha}-S^{-\alpha}_{\alpha,r-2\alpha})
\label{Morisdegreesequence}
\end{equation}
 is equivalent in distribution component-wise and jointly to the vector
$$(\xi_{0},\xi_{1},\ldots,\xi_{r})$$
in~(\ref{Morivector}).
\item[(ii)]It follows that, for $S_{\alpha,-2\alpha}:=0.$
$$
{n^{-\alpha}}\max_{i\ge 0}d_{n}(i)
\overset{a.s}\rightarrow \max_{i\ge 0} (S^{-\alpha}_{\alpha,i+1-2\alpha}-S^{-\alpha}_{\alpha,i-2\alpha}).
$$
\item[(iii)]One  may set $\xi_{0}=S^{-\alpha}_{\alpha,1-2\alpha}$ and $\xi_{1}=S^{-\alpha}_{\alpha,2-2\alpha}-S^{-\alpha}_{\alpha,1-2\alpha}.$ Then noting that $B_{1}$ is a symmetric  $\mathrm{Beta}(\frac{1-\alpha}{\alpha},\frac{1-\alpha}{\alpha})$ random variable, there is the distributional identity 
\begin{equation}
\xi_{1}=S^{-\alpha}_{\alpha,2-2\alpha}[1-B_{1}]\overset{d}=
S^{-\alpha}_{\alpha,2-2\alpha}B_{1}\overset{d}=\xi_{0}
\label{degreeid}
\end{equation}
\item[(iv)]
Note for $(S_{\alpha,\theta},S_{\alpha,1+\theta})$ in a $\mathrm{PD}(\alpha,\theta)$ sequence, the correspondence in~(\ref{degreeid})
only holds for the case $\theta=1-2\alpha.$
\end{enumerate}
\end{prop}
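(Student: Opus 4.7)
The plan is to dispatch parts (i)--(iv) in order, with part (i) carrying the bulk of the work and the remaining parts following as short corollaries or one-line symmetry checks.

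For part (i), I would unwind the recursion (\ref{Moridegreerecursion}) to express each coordinate of the telescoping vector in (\ref{Morisdegreesequence}) as a product. Setting $\Delta_{j} := S^{-\alpha}_{\alpha,j+1-2\alpha} - S^{-\alpha}_{\alpha,j-2\alpha}$ for $j\ge 1$ and $\Delta_{0} := S^{-\alpha}_{\alpha,1-2\alpha}$, iterating (\ref{Moridegreerecursion}) gives
\begin{equation*}
\Delta_{j} = S^{-\alpha}_{\alpha,j+1-2\alpha}(1-B_{j}) \quad (j\ge 1), \qquad \Delta_{0} = S^{-\alpha}_{\alpha,r+1-2\alpha}\,B_{1}B_{2}\cdots B_{r},
\end{equation*}
and more generally $S^{-\alpha}_{\alpha,j+1-2\alpha} = S^{-\alpha}_{\alpha,r+1-2\alpha}\prod_{\ell=j+1}^{r}B_{\ell}$. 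By the stated independence of $(B_{1},\ldots,B_{j})$ from $S^{-\alpha}_{\alpha,\ell-2\alpha}$ for $\ell>j$ together with mutual independence of the $B_{j}$, an arbitrary mixed moment $\mathbb{E}\bigl[\prod_{j=0}^{r}\Delta_{j}^{k_{j}}\bigr]$ factors as a single stable-type moment $\mathbb{E}[S^{-k\alpha}_{\alpha,r+1-2\alpha}]$ with $k=\sum_{j}k_{j}$ (explicit via (\ref{moment})) times a product of elementary Beta mixed moments $\mathbb{E}[B_{j}^{a_{j}}(1-B_{j})^{b_{j}}]$. I would then collapse the resulting gamma-ratio product and verify term-by-term agreement with Mori's joint moment formula for $(\xi_{0},\ldots,\xi_{r})$ in \cite{Mori}. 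To promote moment agreement to equality in law, one appeals to moment-determinacy: from (\ref{moment}) the moments grow like $\Gamma(k)/\Gamma(k\alpha)\sim k^{k(1-\alpha)}$, so Carleman's condition holds and joint moment-determinacy follows.

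Part (ii) is then immediate by combining (i) with Mori's a.s.\ convergence of the scaled maximum---the identification allows replacing $\xi_{i}$ by $\Delta_{i}$, and the convention $S_{\alpha,-2\alpha}:=0$ unifies the $i=0$ case with $i\ge 1$. For part (iii), specialize (\ref{Moridegreerecursion}) to $j=1$: $B_{1}\sim\mathrm{Beta}\bigl(\tfrac{1-\alpha}{\alpha},\tfrac{1-\alpha}{\alpha}\bigr)$ has equal parameters, so $B_{1}\overset{d}{=}1-B_{1}$, and the chain (\ref{degreeid}) follows from $\xi_{0}=S^{-\alpha}_{\alpha,2-2\alpha}B_{1}$, $\xi_{1}=S^{-\alpha}_{\alpha,2-2\alpha}(1-B_{1})$ with $S^{-\alpha}_{\alpha,2-2\alpha}$ independent of $B_{1}$. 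For part (iv), the general recursion (\ref{keyID2}) involves $\mathrm{Beta}\bigl(\tfrac{\theta+\alpha}{\alpha},\tfrac{1-\alpha}{\alpha}\bigr)$, which is symmetric if and only if $\theta+\alpha=1-\alpha$, i.e.\ $\theta=1-2\alpha$; for other $\theta$ a first-moment comparison via (\ref{moment}) already yields $\mathbb{E}[\xi_{0}]\neq\mathbb{E}[\xi_{1}]$, ruling out the identity.

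I expect the main obstacle to be the explicit bookkeeping in part (i): Mori's joint moments form an intricate product of gamma ratios indexed by $(k_{0},\ldots,k_{r})$, and establishing the exact term-by-term match, after unwinding the Beta and stable factors and collapsing all gamma ratios, requires careful index tracking---tedious but otherwise routine gamma-function algebra.
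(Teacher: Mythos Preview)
Your proposal is correct. The paper explicitly acknowledges that the joint-moment matching you plan for part (i) works, but calls it ``rather tedious'' and instead takes a structural shortcut: Mori's Lemma~3 already shows that the partial sums satisfy $(\xi_0+\cdots+\xi_j)B_j=\xi_0+\cdots+\xi_{j-1}$ with $B_j\sim\mathrm{Beta}\bigl(\tfrac{j-\alpha}{\alpha},\tfrac{1-\alpha}{\alpha}\bigr)$ independent of $\xi_0+\cdots+\xi_j$. Since $(S^{-\alpha}_{\alpha,j+1-2\alpha})_{j\ge 0}$ obeys the identical recursion with the same independence structure, the two sequences of partial sums---and hence of increments---agree in joint law as soon as a single marginal is matched; the paper then only checks $\mathbb{E}[\xi_0^{k}]=\mathbb{E}[S^{-k\alpha}_{\alpha,1-2\alpha}]$. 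So the obstacle you correctly anticipate---the index-heavy gamma bookkeeping over all $(k_0,\ldots,k_r)$---is precisely what the paper bypasses. Your route is more self-contained (it does not rely on locating Mori's lemma, and it makes moment-determinacy explicit via Carleman, which the paper leaves implicit), while the paper's route exposes that the identification is really forced by the shared Markovian recursion, with only the initial scale to pin down. Parts (ii)--(iv) are handled essentially the same way in both.
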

\begin{proof}
One can verify [(i)] by checking that the joint moments of the vector in~(\ref{Morisdegreesequence}) correspond to joint moments of $(\xi_{0},\xi_{1},\ldots,\xi_{r})$ provided in \cite{Mori}. However, while true, this is rather tedious. Mori\cite[Lemma 3]{Mori} shows that for each $j,$
$(\xi_0+\ldots+\xi_{j})B_{j}=
(\xi_0+\ldots+\xi_{j-1}),$ where $B_{j}$ has the same beta distribution as in the $\mathrm{PD}(\alpha,1-2\alpha)$ sequence and $(\xi_0+\ldots+\xi_{j})$ is independent of $B_{j}.$ Due to scaling, there are a myriad of potential solutions for the $(\xi_{j}).$ Nonetheless the  recursion in~(\ref{Moridegreerecursion}) establishes the result if one can show that one can set $\xi_{0}=S^{-\alpha}_{\alpha,1-2\alpha}.$ This is true since $\mathbb{E}[S^{-k\alpha}_{\alpha,1-2\alpha}]=  \mathbb{E}[\xi^{k}_{0}].$
\end{proof}

\begin{rem}The equivalence in distribution of $(\xi_{0},\xi_{1})$ in Proposition~\ref{Moriprop} was noted in \cite{Mori}, and is otherwise evident in the description of the $\beta$-recursive tree. The result in~(\ref{degreeid}) shows the only $\mathrm{PD}(\alpha,\theta)$ case we could have considered is $\theta=1-2\alpha.$ 
\end{rem}

\begin{rem} The case $\beta=\infty$ corresponds to $\alpha\rightarrow 0$ which, by continuity, 
$$
\lim_{\alpha\rightarrow 0}\mathrm{PD}(\alpha,1-2\alpha)=\mathrm{PD}(0,1),
$$
yields the Poisson Dirichlet model $\mathrm{PD}(0,1)$. The rates become $\log(n)$ and  $\xi_{j}=1$ for all $j\ge 0.$ 
\end{rem}
\begin{rem} \cite{Pek2014} first provide an explicit description of these limits when $\beta=0,$ that is $\alpha=1/2$ and hence the $\mathrm{PD}(1/2,0).$
\end{rem}

\section{$\mathrm{PK}_{\alpha}(h\cdot f_{\alpha})$ Markov  chains}
We now present the formal details of the Markov chain for $(S^{-\alpha}_{\alpha,\theta+r})_{\{r\ge 0\}}$ under $\mathrm{PD}(\alpha,\theta)$ and under a general change of measure to $\mathrm{PK}_{\alpha}(h\cdot f_{\alpha})$ as described in~\cite{JamesPGarxiv}. As was noted earlier, one can also deduce this from ~\cite{Haas}. It suffices to work with the basic case of a $\mathrm{PD}(\alpha,0)$ sequence. Note from the recursion, there is the identity
\begin{equation}
S^{-\alpha}_{\alpha,0}=S^{-\alpha}_{\alpha,r}\prod_{k=1}^{r}B_{k}
\label{stableid}
\end{equation}
where here $B_{k}$ are independent $\mathrm{Beta}{(\frac{\alpha+k-1}{\alpha},\frac{1-\alpha}{\alpha})}$ variables independent of  $S_{\alpha,r}.$

\begin{prop}\label{JPPY}For each $r,$ let $(T_{\alpha,0}, T_{\alpha,1},\ldots,T_{\alpha,r})$ denote a vector of random variables such that $T_{\alpha,0}\overset{d}=S_{\alpha}$ and there is the relationship for each integer $k$
\begin{equation}
T_{\alpha,(k-1)}=T_{\alpha,k}\times V^{-1/\alpha}_{k}
\label{Vequation}
\end{equation}
where $V_{k}$ has a  $\mathrm{Beta}{(\frac{\alpha+k-1}{\alpha},\frac{1-\alpha}{\alpha})}$ distribution, independent of $T_{\alpha,k}$ and marginally
$T_{\alpha,k}\overset{d}=S_{\alpha,k}.$ Then, the conditional distribution of $T_{\alpha,k}$ given  $T_{\alpha,k-1}=t$ is the same for all $k$ and equates to the density,
\begin{equation}
P(T_{\alpha,1}\in ds|T_{\alpha,0}=t)/ds=\frac{\alpha^{2}}{\Gamma(\frac{1-\alpha}{\alpha})}
\frac{(s/t)^{\alpha-1}(1-(s/t)^{\alpha})^{\frac{(1-\alpha)}{\alpha}-1}f_{\alpha}(s)}{t^{2}f_{\alpha}(t)},
\label{transitionV}
\end{equation}
for $s<t.$
By a change of variable $v=(s/t)^{\alpha}$ the density of $V_{1}|T_{\alpha,0}=t$ is given by
\begin{equation}
P(V_{1}\in dv|T_{\alpha,0}=t)/dv=\frac{\alpha}{\Gamma(\frac{1-\alpha}{\alpha})}
\frac{(1-v)^{\frac{(1-\alpha)}{\alpha}-1}f_{\alpha}(v^{1/\alpha}t)}{tf_{\alpha}(t)}.
\label{transitionV2}
\end{equation}
Furthermore $(V_{1},\ldots,V_{r})$ are independent variables, independent of $T_{\alpha,r}.$
The sequence is a Markov chain, governed by a $\mathrm{PD}(\alpha,0)$ law.
\end{prop}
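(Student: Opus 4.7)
The strategy is a direct change-of-variables argument. Since $T_{\alpha,k-1}=T_{\alpha,k}V_{k}^{-1/\alpha}$ with $V_{k}$ independent of $T_{\alpha,k}$, one obtains the joint density of $(T_{\alpha,k-1},T_{\alpha,k})$ from the product density of $(T_{\alpha,k},V_{k})$ via the smooth bijection $(s,v)\mapsto(sv^{-1/\alpha},s)$, whose inverse Jacobian factor is $\alpha s^{\alpha}t^{-\alpha-1}$. Dividing by the marginal $f_{\alpha,k-1}(t)$ of $T_{\alpha,k-1}$ and verifying that the result has no residual $k$-dependence yields the conditional density.

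I would first handle $k=1$: substituting $v=(s/t)^{\alpha}$ into the product of $f_{\alpha,1}(s)=s^{-1}f_{\alpha}(s)/\Gamma(1/\alpha+1)$ and the $\mathrm{Beta}(1,(1-\alpha)/\alpha)$ density of $V_{1}$, dividing by $f_{\alpha}(t)$, and simplifying using $\Gamma(1/\alpha+1)=(1/\alpha)\Gamma(1/\alpha)$ together with $s^{\alpha-1}/t^{\alpha+1}=(s/t)^{\alpha-1}/t^{2}$ immediately produces~(\ref{transitionV}).

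For general $k$ the same steps apply with Beta parameters $((\alpha+k-1)/\alpha,(1-\alpha)/\alpha)$ and $f_{\alpha,k}(s)=\Gamma(k+1)s^{-k}f_{\alpha}(s)/\Gamma(k/\alpha+1)$. The polynomial factors collapse via $s^{-k}(s/t)^{k-1}s^{\alpha}=s^{\alpha-1}t^{-(k-1)}$, which precisely cancels the $t^{-(k-1)}$ contributed by $f_{\alpha,k-1}(t)^{-1}$. The main obstacle, and the only nontrivial verification, is the algebraic collapse of the gamma prefactor
\begin{equation*}
\alpha\cdot\frac{\Gamma(k+1)\,\Gamma(k/\alpha)\,\Gamma((k-1)/\alpha+1)}{\Gamma(k/\alpha+1)\,\Gamma((\alpha+k-1)/\alpha)\,\Gamma(k)\,\Gamma((1-\alpha)/\alpha)}\;=\;\frac{\alpha^{2}}{\Gamma((1-\alpha)/\alpha)},
\end{equation*}
which follows from $\Gamma(k/\alpha)/\Gamma(k/\alpha+1)=\alpha/k$, $\Gamma(k+1)/\Gamma(k)=k$, and the trivial identity $\Gamma((\alpha+k-1)/\alpha)=\Gamma((k-1)/\alpha+1)$. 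Hence the transition is time-homogeneous and equal to~(\ref{transitionV}).

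Finally, the change of variable $v=(s/t)^{\alpha}$ with $ds=(t/\alpha)v^{1/\alpha-1}dv$ converts~(\ref{transitionV}) into~(\ref{transitionV2}). The independence and Markov assertions follow by construction: taking $T_{\alpha,r}\sim f_{\alpha,r}$ and $V_{1},\ldots,V_{r}$ mutually independent a priori, the identity~(\ref{stableid}) ensures that each $T_{\alpha,k}=T_{\alpha,r}\prod_{j=k+1}^{r}V_{j}^{-1/\alpha}$ has the prescribed marginal $f_{\alpha,k}$ and depends only on $(T_{\alpha,r},V_{k+1},\ldots,V_{r})$, hence is independent of $V_{1},\ldots,V_{k}$. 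The Markov property is then immediate because $(T_{\alpha,0},\ldots,T_{\alpha,k-2})$ is a deterministic function of $(T_{\alpha,k-1},V_{1},\ldots,V_{k-2})$ while $T_{\alpha,k}$ depends only on $(T_{\alpha,r},V_{k+1},\ldots,V_{r})$, giving conditional independence of past and future given $T_{\alpha,k-1}$. Matching each $V_{k}$ with the Beta factor $B_{k}$ in~(\ref{stableid}) identifies the chain as governed by $\mathrm{PD}(\alpha,0)$.
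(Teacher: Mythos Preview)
Your proof is correct and follows essentially the same route as the paper's own argument: compute the joint density of $(T_{\alpha,k-1},T_{\alpha,k})$ from the product density of $(T_{\alpha,k},V_{k})$ via the change of variables $v=(s/t)^{\alpha}$, then divide by the marginal $f_{\alpha,k-1}(t)$ and observe that all $k$-dependence cancels. The paper states this as an ``elementary Bayes rule argument'' and writes down the resulting joint density directly; you have supplied the explicit gamma-function bookkeeping that the paper omits, and you have spelled out the independence and Markov structure from the recursive construction more carefully than the paper does, but the underlying method is identical.
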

\begin{proof}
Because of the independence between $V_{k}$ and $T_{\alpha,k}$ the proof just reduces to an elementary Bayes rule argument. Details are presented for clarity. The distribution of $T_{\alpha,k-1}|\hat{T}_{\alpha,k}=s$ is just $V^{-1/\alpha}_{k}s,$ where $V_{k}\sim\mathrm{Beta}{(\frac{\alpha+k-1}{\alpha},\frac{1-\alpha}{\alpha})}$. Use the fact that for each $k,$  $T_{\alpha,k}$ has density
$$
f_{\alpha,k}(s)=\frac{\Gamma(k+1)}{\Gamma(\frac{k+\alpha}{\alpha})}s^{-k}f_{\alpha}(s),
$$
to show that the joint density of $T_{\alpha,k-1},\hat{T}_{\alpha,k})$ is,
\begin{equation}
\frac{\alpha^{2}\Gamma(k)t^{-(k+1)}}{\Gamma(\frac{k+\alpha-1}{\alpha})\Gamma(\frac{1-\alpha}{\alpha})}
(s/t)^{\alpha-1}(1-(s/t)^{\alpha})^{\frac{(1-\alpha)}{\alpha}-1}f_{\alpha}(s).
\label{jointSk}
\end{equation}
 Now divide~(\ref{jointSk}) by the $f_{\alpha,k-1}(t)$ density of $T_{\alpha,k-1},$ to obtain (\ref{transitionV}). The Markov chain is otherwise evident from the exact equality statement.
\end{proof}

\begin{cor}\label{corollary1}
As consequences of Proposition~\ref{JPPY} the distribution of the quantities above with respect to a $PK_{\alpha}(h\cdot f_{\alpha})$ are given by (\ref{transitionV}) and specifying $T_{\alpha,0}$ to have density $h(t)f_{\alpha}(t).$
\begin{enumerate}
\item[(i)]
 In particular, the joint law of $(V_{1},\ldots,V_{r},T_{\alpha,r})$ is given by,
\begin{equation}
\left[\prod_{k=1}^{r}f_{B_{k}}(v_{k})\right]h(s/\prod_{l=1}^{r}v^{1/\alpha}_{l})f_{\alpha,r}(s)ds
\label{jointV}
\end{equation}
where $f_{B_{k}}$ denotes the density of a $\mathrm{Beta}{(\frac{\alpha+k-1}{\alpha},\frac{1-\alpha}{\alpha})}$ variable. $f_{\alpha,r}(s)=c_{\alpha,r}s^{-r}f_{\alpha}(s).$
\item[(ii)]It follows that the conditional distribution of $T_{\alpha,r}|V_{1},\ldots, V_{r}$ is proportional to $h(s/\prod_{l=1}^{r}v^{1/\alpha}_{l})f_{\alpha,r}(s).$
\item[(iii)]Relative to  $\hat{T}_{\alpha,r},$ for each $j=1,2,\ldots,r$
$$
T^{-\alpha}_{\alpha,j-1}=T^{-\alpha}_{\alpha,r}\times \prod_{l=j}^{r}V_{l}
$$
\end{enumerate}
\end{cor}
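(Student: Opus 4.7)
The plan is to bootstrap Corollary~\ref{corollary1} from Proposition~\ref{JPPY} by a change-of-measure argument, with part (iii) being a purely algebraic consequence of the defining recursion~(\ref{Vequation}).

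First I would record the baseline: under $\mathrm{PD}(\alpha,0)$ (the case $h\equiv 1$), Proposition~\ref{JPPY} asserts that $V_1,\ldots,V_r,T_{\alpha,r}$ are mutually independent with $V_k\sim\mathrm{Beta}((\alpha+k-1)/\alpha,(1-\alpha)/\alpha)$ and $T_{\alpha,r}\stackrel{d}{=}S_{\alpha,r}$, so their joint density is the product
$$p_0(v_1,\ldots,v_r,s)=\Big[\prod_{k=1}^{r}f_{B_k}(v_k)\Big]f_{\alpha,r}(s).$$

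Second, I would argue that moving from $\mathrm{PD}(\alpha,0)$ to $\mathrm{PK}_{\alpha}(h\cdot f_{\alpha})$ retilts only the marginal of $T_{\alpha,0}$ from $f_{\alpha}(t)$ to $h(t)f_{\alpha}(t)$, leaving the transition density~(\ref{transitionV}) intact. This rests on two observations: the kernel~(\ref{transitionV}) is built from $f_{\alpha}$ and the unconditional $\mathrm{Beta}$ law of $V_k$ alone, and the very definition $\mathrm{PK}_{\alpha}(h\cdot f_{\alpha})=\int\mathrm{PD}(\alpha|t)h(t)f_{\alpha}(t)\,dt$ amounts to reweighting the initial mass only. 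Hence the Radon--Nikodym density of the chain relative to the $\mathrm{PD}(\alpha,0)$ law is $h(T_{\alpha,0})$. Iterating~(\ref{Vequation}) yields the deterministic identity $T_{\alpha,0}=T_{\alpha,r}\prod_{l=1}^{r}V_l^{-1/\alpha}$, and multiplying $p_0$ by $h$ evaluated at $s/\prod_l v_l^{1/\alpha}$ produces precisely~(\ref{jointV}); this is (i), and normalization is automatic since $\int h(t)f_{\alpha}(t)\,dt=1$ by hypothesis.

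For (ii) I would integrate $s$ out of~(\ref{jointV}) to obtain the $(V_1,\ldots,V_r)$-marginal; dividing then shows the conditional density of $T_{\alpha,r}$ given $(V_1,\ldots,V_r)$ is proportional to $h(s/\prod v_l^{1/\alpha})f_{\alpha,r}(s)$ as claimed. For (iii), raising~(\ref{Vequation}) to the power $-\alpha$ gives $T_{\alpha,k-1}^{-\alpha}=T_{\alpha,k}^{-\alpha}V_k$; telescoping from $k=j$ up to $k=r$ yields the desired factorization $T_{\alpha,j-1}^{-\alpha}=T_{\alpha,r}^{-\alpha}\prod_{l=j}^{r}V_l$.

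The main obstacle is conceptual rather than computational: I must defend the invariance of the transition kernel~(\ref{transitionV}) under the tilting from $f_{\alpha}$ to $h\cdot f_{\alpha}$. Once this is in place, the three assertions reduce to manipulations already encoded in Proposition~\ref{JPPY} and the recursion~(\ref{Vequation}).
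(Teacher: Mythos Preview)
Your argument is correct and matches the paper's (largely implicit) approach: the paper offers no separate proof environment for this corollary, treating it as an immediate consequence of Proposition~\ref{JPPY} together with the change-of-measure definition of $\mathrm{PK}_{\alpha}(h\cdot f_{\alpha})$, and your Radon--Nikodym tilting by $h(T_{\alpha,0})=h\bigl(T_{\alpha,r}\prod_{l}V_{l}^{-1/\alpha}\bigr)$ is exactly what that entails. The paper's only additional remark is that~(\ref{jointV}) integrates to $1$ by the identity~(\ref{stableid}), which is the normalization you also note.
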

\begin{rem}\label{Expecth} The fact that the quantity in~(\ref{jointV}) integrates to $1,$ follows from the identity~(\ref{stableid}). Which reads as
$$
\mathbb{E}_{\alpha,0}[h(S_{\alpha})]=\mathbb{E}_{\alpha,0}[h(S_{\alpha,r}\times \prod_{i=1}^{r}B^{-1/\alpha}_{\left(\frac{i-1+\alpha}{\alpha},\frac{1-\alpha}{\alpha}\right)})]=1.
$$
\end{rem}

\begin{rem}\label{randomtrees}Comparing (\ref{transitionV}) with Haas, Miermont, Pitman and Winkel~\cite[Proposition 18, (ii),(iii)]{Haas} shows that under a $\mathrm{PD}(\alpha,1-\alpha)$ model, where for each $k=1,2,\ldots;$ $T_{\alpha,k-1}\overset{d}=S_{\alpha,k-\alpha},$ $T^{-\alpha}_{\alpha,k-1}$ equates to the total length  of $\mathcal{\tilde{R}}^{\mathrm{ord}}_{k},$ say $
\mathbb{D}(\mathcal{\tilde{R}}^{\mathrm{ord}}_{k})=T^{-\alpha}_{\alpha,k-1}.$
Where $\mathcal{\tilde{R}}^{\mathrm{ord}}_{k}$ is a member of an increasing family
$(\mathcal{\tilde{R}}^{\mathrm{ord}}_{k})$
of leaf-labeled $\mathbb{R}$-trees with edge lengths, arising as limits in Ford's sequential construction. It follows from~(\ref{Vequation}) that, in this setting, $(V_{k})$ can be interpreted as
$$
V_{k}=\frac{\mathbb{D}(\mathcal{\tilde{R}}^{\mathrm{ord}}_{k})}{
\mathbb{D}(\mathcal{\tilde{R}}^{\mathrm{ord}}_{k+1})}\overset{d}=B_{(\frac{k}{\alpha},\frac{1-\alpha}{\alpha})},
$$
which is independent of $\mathbb{D}(\mathcal{\tilde{R}}^{\mathrm{ord}}_{k+1})={T}^{-\alpha}_{\alpha,k}\overset{d}=S^{-\alpha}_{\alpha,k+1-\alpha}.$ In fact $(V_{1},\ldots,V_{k})$ are mutually independent and independent of $\mathbb{D}(\mathcal{\tilde{R}}^{\mathrm{ord}}_{k+1}).$  
See~\cite{Haas} for a more precise interpretation of $(\mathcal{\tilde{R}}^{\mathrm{ord}}_{k}).$ See also \cite{Dong2006} for related discussions involving fragmentation by  $\mathrm{PD}(\alpha,1-\alpha)$ models.
\end{rem}
\begin{rem}Note there are other distributions besides $\mathrm{PD}(\alpha,\theta)$ that may produce the same sequences $(S^{-\alpha}_{\alpha,\theta+r})_{\{r\ge 0\}}.$ From a point of view of wide applicability of our results, this is rather fortunate. A key word in our exposition is~\textit{version}. The explicit constructions via bridges in~\cite{JamesPGarxiv} or the analysis of~\cite{Haas}, in the $\mathrm{PD}(\alpha,1-\alpha)$ case, already verifies the existence of the appropraite versions of variables we identify via Corollary~\ref{corollary1} with respect to a sequence of mass partitions $((P_{k,r})_{\{k\ge 1\}})_{\{r\ge 0\}}$ following a sequence of laws determined by $
\mathrm{PK}_{\alpha}(h\cdot f_{\alpha}).$ In the next section we will work with characterizing features of such families. Namely nested random partitions of $[n]
$ derived from the appropriate Chinese restaurant processes.
\end{rem}

\section{$\mathrm{PD}(\alpha,\theta)$ nested  Chinese restaurant processes}\label{nested}
For any fixed $r\ge 0,$ set $(P_{k,r})_{\{k\ge 1\}}\sim \mathrm{PD}(\alpha,\theta+r),$ and independent of this let $(U_{k,r})_{k\ge 1}$ be a collection of iid $\mathrm{Uniform}[0,1]$ random variables.  Then the random probability measure $P_{\alpha,\theta+r}(y):=\sum_{k=1}^{\infty}P_{k,r}\mathbb{I}_{\{U_{k,r}\leq y\}}$ is a $\mathrm{PD}(\alpha,\theta+r)$-bridge,  also known as a Pitman-Yor process as coined in~\cite{IJ2001}. For $j=1,\ldots,r$ one may set $B_{j}=S^{-\alpha}_{\alpha,\theta+j-1}/S^{-\alpha}_{\alpha,\theta+j}$ which are independent $\mathrm{Beta}(\frac{\theta+\alpha+j-1}{\alpha},\frac{1-\alpha}{\alpha})$ variables independent of $P_{\alpha,\theta+r}.$ Let $\mathbb{U}(y)=y \in [0,1]$ denote a $\mathrm{Uniform}[0,1]$ cdf. Now for each $j$ define independent simple bridges
\begin{equation}
\lambda_{j}(y)=B_{j}\mathbb{U}(y)+(1-B_{j})\mathbb{I}_{\{\tilde{U}_{j}\leq y\}}
\label{simplebridges}
\end{equation}
Then the coagulation operation in ~\cite{Dong2006} can be encoded by the compositional identity, for each $r\ge 1,$ 
$$
P_{\alpha,\theta+r-1}(y)=P_{\alpha,\theta+r}(\lambda_{r}(y))=P_{\alpha,\theta+r}(B_{r}y)+P_{\alpha,\theta+r}(1-B_{r})\mathbb{I}_{\{\tilde{U}_{r}\leq y\}}
$$
where $P_{\alpha,\theta+r}(1-B_{r})$ has a $\mathrm{Beta}(1-\alpha,\theta+\alpha+r-1)$ distribution. Furthermore, one can show that $P_{\alpha,\theta+r}(B_{r}y)/P_{\alpha,\theta+r}(B_{r})=P_{\alpha,\theta+r\alpha}(y)$ independent of $P_{\alpha,\theta+r}(B_{r}).$ More generally, for any $r\ge 1,$
$$
P_{\alpha,\theta}(\cdot)=P_{\alpha,\theta+r}\circ \lambda_{r}\circ\cdots\circ\lambda_{1}(\cdot).$$
It follows that if $F^{-1}$ denotes a possibly random quantile function, then for every $r\ge 1$
$$
P^{-1}_{\alpha,\theta}(\cdot)=\lambda^{-1}_{1}\circ\cdots\circ\lambda^{-1}_{r}\circ P^{-1}_{\alpha,\theta+r}(\cdot).
$$
We shall use these properties to construct nested sequences of Chinese restaurant process partitions of $[n]=\{1,2,\ldots,n\}.$ Note a dual fragmentation process can be deduced from~\cite{Dong2006} which will produce nested partitions with the same distributions in reverse order. For a Chinese restaurant process following a $\mathrm{PD}(\alpha,\theta+r)$ distribution, the sampling scheme proceeds as follows. The first customer with index $\{1\}$ is seated to a new table $A_{1,r}.$ After $n$ customers arrive in succession, a partition of $[n],$ $(A_{1,r},\ldots A_{K_{n,r},r})$ where $K_{n,r}\leq n$ are the number of distinct blocks in the partition, and $N_{i,r}=|A_{i,r}|$ are the sizes of each block, is produced. Given this configuration, customer ${\{n+1\}}$ is seated to a new table with probability $(\theta+r+K_{n,r}\alpha)/(\theta+r+n)$  and sits at an existing table $A_{i,r}$ with probability $(N_{i,r}-\alpha)/(\theta+r+n),$ for $i=1,\ldots, K_{n,r}.$ We now describe the combinatorial scheme we have in mind.

\paragraph{Nested $\mathrm{PD}(\alpha,\theta)$ partitions of $[n]$}
\begin{enumerate}
\item[(i)] For any $r\ge 1,$ draw a random partition of $[n],$ $(A_{1,r},\ldots,A_{K_{n,r},r})$ from a $\mathrm{PD}(\alpha,\theta+r)$ Chinese restaurant process scheme.
\item[(ii)] Draw $(U^{*}_{1,r},\ldots,U^{*}_{K_{n,r},r})$ iid Uniform$[0,1]$ variables.
\item[(iii)] Recall that $\tilde{U}_{r}$ is the atom of $\lambda_{r},$ and has a Uniform$[0,1]$ distribution. A $\mathrm{PD}(\alpha, \theta+r-1)$ partition of $[n]$
$(A_{1,r-1},\ldots,A_{{K}_{n,r-1},r-1}),$ is obtained as follows. Blocks of $(A_{1,r},\ldots,A_{K_{n,r},r})$ are merged into a set $A'_{1,r-1}$ defined as
$$
A'_{1,r-1}={\{A_{i,r}:\lambda^{-1}_{r}(U^{*}_{i,r})=\tilde{U}_{r}\}},
$$
if $A'_{1,r-1}$ is not empty, set $A_{1,r-1}=A'_{1,r-1}, $the remaining ${K}_{n,r}-|A'_{1,r-1}|=K_{n,r-1}-1$ blocks of $(A_{1,r},\ldots,A_{K_{n,r},r}),$ are relabeled $A_{2,r-1},\ldots,A_{{K}_{n,r-1},r-1}.$ If $A'_{1,r-1}=\emptyset,$ $K_{n,r-1}=K_{n,r}$
and one sets $A_{k,r-1}=A_{k,r}$ for $k=1,\ldots,K_{n,r}.$
\item[(iv)] Repeat steps [(ii)] and [(iii)] for $r-1,r-2,\ldots,1$ to obtain nested partitions of [n] following $\mathrm{PD}(\alpha,\theta+j)$ marginal distributions for $j=0,\ldots r.$
\end{enumerate}
\begin{rem}Kuba and Panholzer\cite[Proposition 3]{kuba} point out that partitions generated by the $\mathrm{PD}(\alpha,\theta)$ Chinese restaurant can be equally generated by the growth process of generalized plane-oriented recursive trees. As such, some variation of our scheme can be used to produce nested version of such trees. 
\end{rem}
\begin{rem} Note in the general 
$\mathrm{PK}_{\alpha}(h\cdot f_{\alpha})$ setting, one would use simple bridges defined as
 $$
\lambda_{k}(y)=V_{k}\mathbb{U}(y)+(1-V_{k})\mathbb{I}_{\{\tilde{U}_{k}\leq y\}}
$$
for $V_{k}=T^{-\alpha}_{\alpha,k-1}/T^{-\alpha}_{\alpha,k}.$
These are the same entities subject to a change of measure, where generally independence no longer holds. 
\end{rem}
\begin{rem}It is a simple matter to show that 
$P_{\alpha,\theta+r}(\cdot)$ converges almost surely to $\mathbb{U}(\cdot)$ as $r\rightarrow \infty.$ Thus implying that $\lambda_{r}\circ\cdots\circ\lambda_{1}(\cdot)$ converges almost sure to $P_{\alpha,\theta}.$ 
See James~\cite[Section 6.4 and Proposition 6.6]{JamesPGarxiv} for distributional results related to the composition of bridges  
$\lambda_{r}\circ\cdots\circ\lambda_{1}(\cdot).$
\end{rem}
\subsection{Mixed Binomial distributions, $\tilde{p}$-mergers and $\beta$-splitting}
Note the nested scheme described above provides nested versions of all the statistics generally associated with random partitions, and appropriate limits. For brevity we shall only concentrate on results for the sequence of the number of blocks $(K_{n,r})_{\{r\ge 0\}}.$ Let $\mathrm{Bin}(m,p)$ denote a Binomial distribution based on $m$ Bernoulli trials with success probability $p.$ We now describe results for $K_{n,r}.$ The first result is immediate from the description of the nested $\mathrm{PD}(\alpha,\theta)$ scheme. 
\begin{prop}\label{propK}For every $r\ge 1,$ consider the the blocks $(K_{n,0},\ldots, K_{n,r})$ produced by a nested $\mathrm{PD}(\alpha,\theta)$ scheme. It follows that for each $n$ $K_{n,j-1}\leq K_{n,j}$ for $j=1,\ldots r$ with properties;
\begin{enumerate}
\item[(i)]For $j=0,\ldots, r$ the marginal distribution of each $K_{n,j}$ is exactly that of the number of blocks of a $\mathrm{PD}(\alpha,\theta+j)$ partition of $[n[.$ For $j=0,\ldots,r-1$
$$
K_{n,j}=(K_{n,j+1}-|A'_{1,j}|+1)\mathbb{I}_{\{|A'_{1,j}|\ge 2\}}+K_{n,j+1}\mathbb{I}_{\{|A'_{1,j}|\in \{0,1\}\}}
$$
\item[(ii)] For each $j$ the conditional distribution of $|A'_{1,j-1}|$ given $(K_{n,j},B_{j})$ is $\mathrm{Bin}(K_{n,j},1-B_{j})$
\end{enumerate}
\end{prop}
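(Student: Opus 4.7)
The key technical input is a single one-dimensional geometric computation about the simple bridges $\lambda_j$ of (\ref{simplebridges}). Since $\lambda_j(y)=B_j y+(1-B_j)\mathbb{I}_{\{\tilde U_j\le y\}}$ is strictly increasing on $[0,1]$ with a single jump at $y=\tilde U_j$ from height $B_j\tilde U_j$ to $B_j\tilde U_j+(1-B_j)$, its (right-continuous) generalized inverse satisfies
\[
\{u\in[0,1]:\lambda_j^{-1}(u)=\tilde U_j\}=[B_j\tilde U_j,\,B_j\tilde U_j+(1-B_j)),
\]
an interval of Lebesgue length $1-B_j$, a quantity not depending on $\tilde U_j$. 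Everything else is bookkeeping against the nested construction.

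Part (ii) would follow at once: by step (ii) of the scheme the marks $(U^{*}_{i,j})_{i\ge 1}$ are iid uniform on $[0,1]$ and independent of the partition $(A_{i,j})$, of $B_j$, and of $\tilde U_j$. Conditionally on $B_j$, each of the $K_{n,j}$ indices $i$ independently satisfies $\lambda_j^{-1}(U^{*}_{i,j})=\tilde U_j$ with probability $1-B_j$, and by definition $|A'_{1,j-1}|$ is the number of such indices. Hence $|A'_{1,j-1}|\mid (K_{n,j},B_j)\sim\mathrm{Bin}(K_{n,j},1-B_j)$.

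For part (i), the recursive identity follows directly from step (iii): if $|A'_{1,j}|\ge 2$, the operation replaces $|A'_{1,j}|$ blocks at level $j+1$ by a single merged block, reducing the count by $|A'_{1,j}|-1$; if $|A'_{1,j}|\in\{0,1\}$, no merging actually occurs and the labelled partition of $[n]$ is unchanged. Monotonicity $K_{n,j-1}\le K_{n,j}$ is then immediate. The marginal claim $K_{n,j}\stackrel{d}{=}$ number of blocks of a $\mathrm{PD}(\alpha,\theta+j)$ partition of $[n]$ is proven by downward induction on $j$: the base case $j=r$ is by step (i), and the inductive step uses the compositional identity $P_{\alpha,\theta+j}(\cdot)=P_{\alpha,\theta+j+1}(\lambda_{j+1}(\cdot))$ recalled before the scheme.

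The only delicate point, and the one I would flag as the main obstacle, is verifying that step (iii) implements the bridge-level coagulation $P_{\alpha,\theta+j}=P_{\alpha,\theta+j+1}\circ\lambda_{j+1}$ at the level of partitions of $[n]$. This is handled by the Pitman--Yor bridge representation: a $\mathrm{PD}(\alpha,\theta+j+1)$ CRP partition of $[n]$ is recovered by attaching to each block $A_{i,j+1}$ an independent uniform position $U^{*}_{i,j+1}$ on $[0,1]$, and two blocks coagulate under $\lambda_{j+1}$ iff their positions both fall in the jump-preimage interval computed above, i.e. iff $\lambda_{j+1}^{-1}(U^{*}_{i,j+1})=\tilde U_{j+1}$. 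The merging prescription in step (iii) is precisely this rule, so the resulting partition is distributed as a $\mathrm{PD}(\alpha,\theta+j)$ CRP partition of $[n]$, closing the induction.
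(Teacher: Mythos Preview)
Your proposal is correct and takes the same approach as the paper, which simply declares the result ``immediate from the description of the nested $\mathrm{PD}(\alpha,\theta)$ scheme'' without further argument. You have spelled out precisely the computation the paper leaves implicit---namely that $\{u:\lambda_j^{-1}(u)=\tilde U_j\}$ is an interval of Lebesgue length $1-B_j$---and traced through the bookkeeping; this is exactly what ``immediate'' means here.
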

 
 Note that Proposition~\ref{propK} shows that in step[(iii)] of the nested $\mathrm{PD}(\alpha,\theta)$ scheme one is repeatedly performing some sort of $\tilde{p}_{\alpha,\theta+j}=1-B_{j}$ merger in the language of
Berestycki~\cite[p. 69-70]{Berestycki}. That is $\ell$ of the ${\{A_{1,j},\ldots,A_{K_{n,j,j}}\}},$ blocks are said to coalesce if $|A'_{1,j-1}|=\ell\ge 2.$ The next results, which follow from elementary calculations,  describes some more details about the distributions of $|A'_{1,j-1}|$ and random variables  $(\xi_{n,0},\xi_{n,1},\ldots,\xi_{n,r})$ we define as follows.
Set 
\begin{equation}
\xi_{n,0}=K_{n,0}\mathbb{I}_{\{|A'_{1,0}|\ge 2\}}=(K_{n,1}-|A'_{1,0}|+1)\mathbb{I}_{\{|A'_{1,0}|\ge 2\}},
\label{empiricaldegroo0}
\end{equation}
and 
for $j=1,\ldots, r,$ define,
\begin{equation}
\xi_{n,j}:=K_{n,j}-K_{n,j-1}=(|A'_{1,j-1}|-1)\mathbb{I}_{\{|A'_{1,j-1}|\ge 2\}}
\label{empiricaldegree}
\end{equation}
As the notation suggests these are meant to be thought of as mimics for degree sequences.
Write the $\mathrm{Beta}(\frac{1-\alpha}{\alpha},\frac{\theta+\alpha}{\alpha})$ density,
$$
\rho_{\alpha,\theta}(v)=\frac{\Gamma(\frac{1+\theta}{\alpha})}
{\Gamma(\frac{\theta+\alpha}{\alpha})\Gamma(\frac{1-\alpha}{\alpha})}v^{1/\alpha-2}{(1-v)}^{\theta/\alpha}.
$$

\begin{prop}In the $\mathrm{PD}(\alpha,\theta)$ setting of Proposition~\ref{propK}, the general distribution of $|A'_{1,0}|,$ given $K_{n,1}=b$ is a  mixed Binomial distribution $\mathrm{Bin}(b,1-B_{1}),$  where $1-B_{1}$ is a $\mathrm{Beta}(\frac{1-\alpha}{\alpha},\frac{\theta+\alpha}{\alpha})$ 
random variable with density function $\rho_{\alpha,\theta}(v).$ Hence the 
probability mass function of $|A'_{1,0}|,$  is
$$
p_{\alpha,\theta}(\ell|b)={b \choose \ell}\frac{\Gamma(\frac{1+\theta}{\alpha})
\Gamma(\frac{\theta+\alpha}{\alpha}+b-\ell)\Gamma(\frac{1}{\alpha}+\ell-1)}
{\Gamma(\frac{\theta+\alpha}{\alpha})\Gamma(\frac{1-\alpha}{\alpha})
\Gamma(\frac{1+\theta}{\alpha}+b)}
$$
Furthermore the distribution of  $|A'_{1,0}|,$ given  $K_{n,1}=b,|A'_{1,0}|\ge 2,$ is
for $2\leq \ell\leq b,$
\begin{equation}
\lambda_{\alpha,\theta}(\ell|b)=\frac{p_{\alpha,\theta}(\ell|b)}{1-p_{\alpha,\theta}(0|b)-p_{\alpha,\theta}(1|b)}
\label{Lambdakernel}
\end{equation}

The corresponding conditional distributions of the random variable $K_{n,1}-|A'_{1,0}|$ can be expressed as $p^{+}_{\alpha,\theta}(\ell|b):=p_{\alpha,\theta}(b-\ell|b)$ and $
\lambda^{+}_{\alpha,\theta}(\ell|b):=
\lambda_{\alpha,\theta}(b-\ell|b)$ respectively.
Replace $\theta$ with $\theta+j-1$ to obtain corresponding results for $|A'_{1,j-1}|,$ given $K_{n,j}$
\begin{enumerate}
\item[(i)] In the Brownian cases, $\mathrm{PD}(1/2,\theta),$ $\theta>-1/2,$
$$
p_{1/2,\theta}(\ell|b)=\frac{(2\theta+1)(2\theta+b-\ell)!b!}{(2\theta+b+1)!(b-\ell)!}
$$
In particular $p_{1/2,0}(j|b)=1/(b+1)$ is the discrete uniform distribution on ${\{0,\ldots,b\}},$ and
$p_{1/2,1/2}(\ell|b)=2(b+1-\ell)/[(b+1)(b+2)].$
\item[(ii)] In the limiting Dirichlet case, $\mathrm{PD}(0,\theta),$
$$
p_{0,\theta}(\ell|b)={b \choose \ell}p^{\ell}_{\theta}{(1-p_{\theta})}^{b-\ell}
$$
is a proper Binomial distribution with success probability $p_{\theta}=1/(\theta+1),$ for $\theta>0.$
\end{enumerate}
\end{prop}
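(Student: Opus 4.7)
The plan is to leverage Proposition~\ref{propK}(ii), which already delivers the mixed Binomial structure: conditionally on $(K_{n,1}, B_1)$, we have $|A'_{1,0}| \sim \mathrm{Bin}(K_{n,1}, 1-B_1)$. First I would verify that the mixing weight $1-B_1$ has the stated $\mathrm{Beta}(\frac{1-\alpha}{\alpha}, \frac{\theta+\alpha}{\alpha})$ law, independent of $K_{n,1}$. The Beta law follows from $B_1 \sim \mathrm{Beta}(\frac{\theta+\alpha}{\alpha}, \frac{1-\alpha}{\alpha})$ as recorded in the discussion preceding~(\ref{simplebridges}) and from identity~(\ref{keyID2}). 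Independence is a direct consequence of the construction: $B_1$ may be realized independently of $S_{\alpha,\theta+1}$ via~(\ref{keyID2}), and the $\mathrm{PD}(\alpha,\theta+1)$ Chinese restaurant process generating $K_{n,1}$ uses only the $\mathrm{PD}(\alpha,\theta+1)$ law; equivalently, in the nested scheme the bridge $\lambda_1$ (and hence its mass-split $B_1$) is constructed separately from the partition at level one.

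With this in hand, the pmf is obtained by integrating out the mixing weight:
\begin{equation*}
p_{\alpha,\theta}(\ell|b)=\binom{b}{\ell}\int_0^1 v^\ell (1-v)^{b-\ell}\rho_{\alpha,\theta}(v)\,dv.
\end{equation*}
Pulling out the normalizing constant of $\rho_{\alpha,\theta}$, the integrand becomes a proportional Beta density with exponents $v^{\frac{1}{\alpha}+\ell-2}(1-v)^{\frac{\theta+\alpha}{\alpha}+b-\ell-1}$, so the Beta integral identity with $a=\frac{1}{\alpha}+\ell-1$ and $c=\frac{\theta+\alpha}{\alpha}+b-\ell$ (and $a+c=\frac{1+\theta}{\alpha}+b$) yields exactly the claimed Gamma-function formula. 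The truncated kernel~(\ref{Lambdakernel}) is then immediate from conditioning on the event $\{|A'_{1,0}|\ge 2\}$. For the symmetry statement, the elementary Binomial identity shows that conditionally on $B_1$, $K_{n,1}-|A'_{1,0}| \sim \mathrm{Bin}(K_{n,1},B_1)$; integrating against the law of $B_1$ and renaming gives $p^+_{\alpha,\theta}(\ell|b)=p_{\alpha,\theta}(b-\ell|b)$ and similarly for the truncated kernel. The substitution $\theta \mapsto \theta+j-1$ for general $j$ follows because at step $j$ one uses the $\mathrm{PD}(\alpha,\theta+j)$ CRP together with $B_j \sim \mathrm{Beta}(\frac{\theta+\alpha+j-1}{\alpha},\frac{1-\alpha}{\alpha})$.

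For the Brownian specializations (i), substituting $\alpha=1/2$ collapses the four Gamma values to $\Gamma(2\theta+2),\Gamma(2\theta+1),\Gamma(1),\Gamma(\ell+1)$ etc., and the ratio reduces to the factorial formula by standard identities; the cases $\theta=0,\tfrac{1}{2}$ are then direct arithmetic. For the Dirichlet limit (ii), I would obtain the result via a degeneracy argument on the mixing distribution: as $\alpha \downarrow 0$, the Beta$(\frac{1-\alpha}{\alpha},\frac{\theta+\alpha}{\alpha})$ law of $1-B_1$ has mean $\frac{1-\alpha}{1+\theta}\to p_\theta=1/(\theta+1)$ and variance of order $\alpha \to 0$, hence concentrates at $p_\theta$; the mixed Binomial therefore degenerates to a proper $\mathrm{Bin}(b,p_\theta)$. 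This can be cross-checked by applying $\Gamma(x+c)/\Gamma(x)\sim x^c$ to the three Gamma ratios in $p_{\alpha,\theta}(\ell|b)$ as the arguments $\to\infty$, which recovers $\binom{b}{\ell}p_\theta^\ell(1-p_\theta)^{b-\ell}$.

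The main conceptual subtlety is the independence of $B_1$ and $K_{n,1}$: a priori both depend on the underlying $(P_{k,1})$, and the almost sure limit $n^{-\alpha}K_{n,1}\to S^{-\alpha}_{\alpha,\theta+1}$ shows $K_{n,1}$ is \emph{not} independent of $S_{\alpha,\theta+1}$. Resolving this requires the structural identity~(\ref{keyID2}), which permits a choice of $B_1$ independent of $S_{\alpha,\theta+1}$ (and hence of any functional of the $\mathrm{PD}(\alpha,\theta+1)$ CRP, including $K_{n,1}$). Beyond that point the proof is routine Beta-Binomial bookkeeping and Gamma-ratio asymptotics.
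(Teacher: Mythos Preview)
Your proposal is correct and follows essentially the same route as the paper. The paper itself offers no detailed proof for this proposition, stating only that the results ``follow from elementary calculations'' building on Proposition~\ref{propK}; what you have written is precisely those elementary calculations spelled out---integrating the $\mathrm{Bin}(b,1-B_1)$ mass function against the $\mathrm{Beta}(\tfrac{1-\alpha}{\alpha},\tfrac{\theta+\alpha}{\alpha})$ density via the Beta integral, then reading off the Gamma-ratio formula and its specializations. Your final paragraph on the independence of $B_1$ and $K_{n,1}$ is more careful than the paper (which simply takes it as built into the nested construction of Section~\ref{nested}, where the $B_j$ are declared independent of $P_{\alpha,\theta+r}$ and of one another), but this is added rigor rather than a different argument.
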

Note starting from  some $\mathrm{PD}(\alpha,\theta+r)$ it follows that every layer of our $\mathrm{PD}(\alpha,\theta)$ nested scheme produces proper consistent infinitely exchangeable partitions in $[n].$ As such, one may view our scheme, as  discrete time coalescent process based on a sequence of merger rates determined by measures $(\Lambda_{\alpha,\theta+r})_{\{r\ge 0\}},$ where 
from $\lambda_{\alpha,\theta}(\ell|b),$ in~$(\ref{Lambdakernel}),$
$$
\Lambda_{\alpha,\theta}(dv)=\frac{\Gamma(\frac{1+\theta}{\alpha})}
{\Gamma(\frac{\theta+\alpha}{\alpha})\Gamma(\frac{1-\alpha}{\alpha})}v^{1/\alpha}{(1-v)}^{\theta/\alpha}dv.
$$
For a fixed time, these schemes are suggestive of relations to a class of $\Lambda_{\alpha,\theta}$-coalescents where $\Lambda_{\alpha,\theta}$ corresponds to a  $\mathrm{Beta}(\frac{1+\alpha}{\alpha},\frac{\theta+\alpha}{\alpha})$-coalescent. However our models should not be confused with such processes. Rather, our models are also  identified as continuous time coalescent processes by defining as waiting  times $(S^{-\alpha}_{\alpha,\theta+r}-S^{-\alpha}_{\alpha,\theta+r-1},S^{-\alpha}_{\alpha,\theta+r-1}-S^{-\alpha}_{\alpha,\theta+r-2},\ldots)$ in reverse order as written, and viewing our exchangeable bridges $P_{\alpha,\theta+r}(\cdot):=F_{S^{-\alpha}_{\alpha,\theta+r}}(\cdot)$ as, stochastic flows of exchangeable bridges, , $(F_{t},t\ge 0),$  in the spirit of the simple bridge constructions of Bertoin and LeGall~\cite{BerFrag,BerLegall00,BerLegall03} and their connection to Fleming-Viot processes. However, our constructions of the analogous simple bridges $\lambda_{r}$ are quite different as they are based on ratio's of local times, $(B_{r}=S^{-\alpha}_{\alpha,\theta+r-1}/S^{-\alpha}_{\alpha,\theta+r}).$ See remarks in \cite{BertoinGoldschmidt2004} for the $\mathrm{PD}(0,\theta)$ case.
This particular identification of waiting times, is, as we shall explain next, hidden in the literature. Again all such interpretations hold in greater generality under $PK_{\alpha}(h\cdot f_{\alpha}).$ Placing things in reverse increasing order, dual fragmentation processes of partitions of $[n]$ can be obtained by appropriate splitting rules. We shall not attempt to formulate this. However one notes that with respect to mass partitions induced by our schemes there is a natural dual connection to the fragmentation schemes described in \cite[pages 1831-1832]{Haas}. Formally, their Proposition 18 and Corollary 19 describe the same waiting time  structure,
in the form of tree edge lengths $\xi_{0}:=S^{-\alpha}_{\alpha,1-\alpha},$ and 
$\xi_{r}:= S^{-\alpha}_{\alpha,r+1-\alpha}-S^{-\alpha}_{\alpha,r-\alpha},$ for $r\ge 1,$ interpreted in terms of renewal sequences in the $\mathrm{PD}(\alpha,1-\alpha)$ setting. Upon a change of measure, this makes sense for our nested schemes in the general $\mathrm{PK}_{\alpha}(h\cdot f_{\alpha})$ setting. We will describe some more details for general $\alpha$ in section 4.2 and very explicit details in the $\mathrm{PD}(1/2|t)$ case in section 5. We next establish a connection with  Aldous's $\beta$-splitting rule for $\beta>-1.$

\begin{prop}The conditional probability mass functions  $(p_{\alpha,\theta},p^{+}_{\alpha,\theta}),$ and hence also $(\lambda_{\alpha,\theta},\lambda^{+}_{\alpha,\theta}),$ are equivalent if and only if 
$\theta=1-2\alpha.$ That is $B_{1}\overset{d}=(1-B_{1})\sim 
\mathrm{Beta}(\frac{1-\alpha}{\alpha},\frac{1-\alpha}{\alpha}).$
Where for $2\leq \ell\leq b,$
$$
\lambda_{\alpha,1-2\alpha}(\ell|b)\propto p_{\alpha,1-2\alpha}(\ell|b)={b \choose \ell}\frac{\Gamma(\frac{2-2\alpha}{\alpha})
\Gamma(\frac{1-\alpha}{\alpha}+b-\ell)\Gamma(\frac{1}{\alpha}+\ell-1)}
{\Gamma(\frac{1-\alpha}{\alpha})\Gamma(\frac{1-\alpha}{\alpha})
\Gamma(\frac{2-2\alpha}{\alpha}+b)}.
$$
\begin{enumerate}
\item[(i)] Setting $\beta\alpha=1-2\alpha>-\alpha$ it follows that 
$$
p_{\alpha,1-2\alpha}(\ell|b)\propto \tilde{q}^{\mathrm{Aldous}-\beta}_{b}(\ell) {\mbox { for }}  1\leq \ell\leq b-1,
$$
equating to the splitting kernel in the $\beta$-splitting model of Aldous~\cite{AldousClad} for the range $\beta>-1.$ Here we use the notation in~\cite[p.1824]{Haas},
\item[(ii)] Hence,  the Yule model case of $\beta=0$ corresponds to a $\mathrm{PD}(1/2,0)$ model with $p_{1/2,0}(\ell|b)=1/(b+1).$
 The \emph{symmetric random trie} case $\beta\rightarrow\infty$ corresponds to a $\mathrm{PD}(0,1)$ model with $p_{0,1}(\ell|b)={b \choose \ell}{(1/2)}^{b}.$
 In particular, for $b\ge 2,$
 $$
 \tilde{q}^{\mathrm{Aldous}-0}_{b}(\ell)=\frac{1}{b-1}{\mbox { and }} \tilde{q}^{\mathrm{Aldous}-\infty}_{b}(\ell)={b \choose \ell}\frac{1}{2^{b}-2}.
 $$

\end{enumerate}
\end{prop}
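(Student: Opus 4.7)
The proof reduces to three short calculations.

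First, for the if-and-only-if, I use that $p_{\alpha,\theta}(\cdot|b)$ is the mixed $\mathrm{Bin}(b, 1-B_{1})$ mass function identified in the preceding proposition, with $1-B_{1}\sim \mathrm{Beta}(\frac{1-\alpha}{\alpha}, \frac{\theta+\alpha}{\alpha})$. Since $p^{+}_{\alpha,\theta}(\ell|b) := p_{\alpha,\theta}(b-\ell|b)$ is precisely the mass function obtained by mixing with $B_{1}$ instead, equality for every $(b,\ell)$ is equivalent to the distributional identity $B_{1}\overset{d}{=}1-B_{1}$, which for a Beta law is equivalent to equal shape parameters. This forces $\frac{1-\alpha}{\alpha}=\frac{\theta+\alpha}{\alpha}$, giving $\theta=1-2\alpha$ and, by substitution into the general formula, the displayed expression for $p_{\alpha,1-2\alpha}(\ell|b)$. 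The same argument transfers verbatim to $(\lambda_{\alpha,\theta},\lambda^{+}_{\alpha,\theta})$ since these are just the common-factor renormalizations of $(p_{\alpha,\theta},p^{+}_{\alpha,\theta})$ on the set $\{2\leq \ell\leq b\}$.

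Second, for part (i), I reparametrize by $\beta:=\frac{1-2\alpha}{\alpha}$, so that $\beta+1=\frac{1-\alpha}{\alpha}=\frac{1}{\alpha}-1$. Under this change of variable the gamma factors become $\Gamma(\ell+\beta+1)$ and $\Gamma(b-\ell+\beta+1)$ while everything else is $\ell$-free, so
\[
p_{\alpha,1-2\alpha}(\ell|b) \;\propto\; \binom{b}{\ell}\Gamma(\ell+\beta+1)\Gamma(b-\ell+\beta+1),
\]
which, restricted to $1\leq \ell\leq b-1$ and renormalized, is the Aldous $\beta$-splitting kernel; indeed that kernel is itself obtained by integrating $\binom{b}{\ell}v^{\ell}(1-v)^{b-\ell}$ against a $\mathrm{Beta}(\beta+1,\beta+1)$ density, which is exactly the symmetric mixing given by $1-B_{1}\overset{d}{=}B_{1}$ in the $\theta=1-2\alpha$ case. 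The admissible range $\theta>-\alpha$ translates into $\beta>-1$.

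Third, for part (ii), the Yule case $\beta=0$ forces $\alpha=1/2$, $\theta=0$, and the Brownian formula from the preceding proposition collapses to $p_{1/2,0}(\ell|b)=1/(b+1)$. The symmetric-trie case $\beta\to\infty$ corresponds to the limit $\alpha\to 0$, $\theta\to 1$ (i.e.\ $\mathrm{PD}(0,1)$); identifying $p_{0,1}(\ell|b)=\binom{b}{\ell}2^{-b}$ is a one-line Stirling estimate for $\Gamma(\ell+\beta+1)\Gamma(b-\ell+\beta+1)/\Gamma(b+2\beta+2)$, whose leading behaviour is $2^{-b}$ times a factor independent of $\ell$. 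The only real obstacle is bookkeeping—matching the normalizing-constant conventions in \cite{AldousClad} and \cite{Haas} to the Gamma-function form above—but once the substitution $\beta=\frac{1-2\alpha}{\alpha}$ is in place, everything is algebra plus a single routine asymptotic.
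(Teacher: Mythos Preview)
Your argument is correct and is essentially the direct verification the paper has in mind; the proposition is stated in the paper without an explicit proof, as it follows from the preceding elementary calculations. One minor point: for the $\beta\to\infty$ case you invoke a Stirling estimate, but this is unnecessary---the preceding proposition already records that in the $\mathrm{PD}(0,\theta)$ limit $p_{0,\theta}(\ell|b)=\binom{b}{\ell}p_{\theta}^{\ell}(1-p_{\theta})^{b-\ell}$ with $p_{\theta}=1/(\theta+1)$, so setting $\theta=1$ gives $p_{0,1}(\ell|b)=\binom{b}{\ell}2^{-b}$ immediately.
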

\begin{rem}For more on $\Lambda$-coalescents see 
\cite{Berestycki,Pit99,Sagitov}.
\end{rem}
\begin{rem}
See \cite{FordD,Haas,McCullagh} for more on the $\beta$-splitting model and its connection to fragmentation trees.  In addition
See \cite{PitmanWinkel}, \cite[Section 3.3,and Proposition 27]{PitmanWinkel2}, \cite[Proposition 18]{Haas} and \cite[p. 1738]{Dong2006}
which  describe relations to the $\alpha$-model of Ford~\cite{FordD} and the Brownian CRT of Aldous~\cite{AldousCRTI,AldousCRTIII}.
\end{rem}

\subsection{Calculations for general $PK_{\alpha}(h\cdot f_{\alpha})$ nested Gibbs partition schemes}
As we have discussed above, viewing $(S^{-\alpha}_{\alpha,\theta+r})_{\{r\ge 0\}},$
in reverse order, as times where mergers occur gives the formalism to recognize our nested scheme as continuous time coalescent processes  based on a sequence of merger determining measures $(\Lambda_{\alpha,\theta+r})_{\{r\ge 0\}},$ that produce, at each merger time, consistent exchangeable partitions of $[n].$ Conditioning on $S_{\alpha,\theta}=t,$ gives in a distributional,  rather than operational, sense the relevant quantities emerging under  $\mathrm{PD}(\alpha|t).$ schemes. Operationally, since consistent families of Chinese restaurant partitions are produced  under this setting, one carries out the same nested scheme under a $\mathrm{PD}(\alpha|t)$
distribution with time points $T^{-\alpha}_{\alpha,0}=t^{-\alpha}$ and $(T^{-\alpha}_{\alpha,r})_{\{r\ge 0\}}.$ $\mathrm{PK}_{\alpha}(h\cdot f_{\alpha})$ nested schemes proceed in a similar fashion. Thus all producing coalescent schemes which are based on sequences of merger rate determining distributions $(1-V_{k})$ whose distributions generally depend on the number of the blocks considered at the time where mergers are to occur. The exception to this are the $\mathrm{PD}(\alpha,\theta)$ distributions, which represents the only instances where the pairs $(V_{k},K_{n,k})$ are independent. We now describe more explicit details of these distributions.

Recall from ~\cite[Proposition 9]{Pit02}, see also ~\cite{GnedinPitmanI, Pit06}, that the EPPF of a  $\mathrm{PD}(\alpha|t)$ partition of $[n]$ with generic blocks $(A_{1},\ldots,A_{K_{n}}),$ respective block sizes $\textbf{n}_{K_{n}}:=(n_{1},\ldots,n_{K_{n}}),$ and $K_{n}=b$ blocks, can be represented as 
\begin{equation}
p_{\alpha}(n_{1},\ldots,n_{b}|t)=\mathbb{G}^{(n,b)}_{\alpha}(t)\prod_{j=1}^{b}\frac{\Gamma(n_{j}-b)}{
\Gamma(1-\alpha)}
\label{Gibbspartition}
\end{equation}
where $\mathbb{G}^{(n,b)}_{\alpha}(t)={(\Gamma(n-b\alpha)f_{\alpha}(t))}^{-1}\alpha^{b}t^{-n}\int_{0}^{t}f_{\alpha}(t-v)(1-v)^{n-b\alpha-1}dv.$ While the expression for $\mathbb{G}^{(n,b)}_{\alpha}(t),$ seems to be generally intractable, the work of Ho, James and Lau~\cite{HJL} shows that this quantity may  be expressed in terms of special functions corresponding to Fox-H functions in the general $\alpha$ setting and Meijer-G functions in the case of $\alpha$ taking on fractional values. Thus extending the observations of Pitman~\cite{Pit02} for the case of $\alpha=1/2.$ Furthermore, they show that $\mathbb{G}^{(n,b)}_{\alpha}(t)={(\Gamma(n)f_{\alpha}(t))}^{-1}\Gamma(b)\alpha^{b-1}{f}_{\alpha,(n,b)}(t),$ where ${f}_{\alpha,(n,b)}(t)$ is the density of the product of independent random variables,
\begin{equation}
\Sigma_{\alpha,n}(b\alpha)\overset{d}=S_{\alpha,b\alpha}B^{-1}_{b\alpha,n-b\alpha}\overset{d}=
S_{\alpha,n}B^{-1/\alpha}_{(b,\frac{n}{\alpha}-b)},
\label{stablecond}
\end{equation}
which interprets as the conditional density of $S_{\alpha,0}|K_{n}=b$ when the pair $(S_{\alpha,0},K_{n}):=(S_{\alpha,0},K_{n,0})$ are interpreted with respect to a $\mathrm{PD}(\alpha,0)$ distribution. Note the identity  on the right hand side of~(\ref{stablecond}) follows from\cite[eq. (2.11)]{JamesLamperti}[see also \cite{DevroyeJames2}]. This leads to the identity $f_{\alpha}(t)=\sum_{b=1}^{n}\mathbb{P}_{\alpha,0}(K_{n}=b){f}_{\alpha,(n,b)}(t),$ where $\mathbb{P}_{\alpha,0}(K_{n}=b),$ denotes the distribution of $K_{n}$ under $\mathrm{PD}(\alpha,0)$ given in~\cite{Pit06} or otherwise deduced from~(\ref{Gibbspartition}). Hence
\begin{equation}
p_{\alpha}(n_{1},\ldots,n_{b}|s)=\frac{f_{\alpha,(n,b)}(s)}{f_{\alpha}(s)}p_{\alpha,0}(n_{1},\ldots,n_{b})
\label{Gibbspartition2}
\end{equation}
where $p_{\alpha,0}(n_{1},\ldots,n_{b})$ is the EPPF under $\mathrm{PD}(\alpha,0),$ and 
$$
\mathbb{P}_{\alpha}(K_{n,0}=b|S_{\alpha,0}=s)=\frac{f_{\alpha,(n,b)}(s)}{f_{\alpha}(s)}\mathbb{P}_{\alpha,0}(K_{n}=b)
$$

It follows that by integrating (\ref{Gibbspartition}) with respect to $h(s)f_{\alpha}(s),$ that for a general $\mathrm{PK}_{\alpha}(h\cdot f_{\alpha})$ distribution its EPPF, and distribution of $K_{n},$ can be represented as 
$$
p^{(h)}_{\alpha}(\textbf{n}_{b})=W_{n,b}\times p_{\alpha,0}(\textbf{n}_{b}){\mbox { and }}\mathbb{P}^{(h)}_{\alpha}(K_{n}=b)=W_{n,b}\times\mathbb{P}_{\alpha,0}(K_{n}=b)
$$
where, as in \cite[Theorem 4.1]{HJL} suppressing dependence on $h$, $W_{n,b}$ can be expressed as 
$$
W_{n,b}=\mathbb{E}_{\alpha,0}[h(S_{\alpha,0})|K_{n,0}=b]
=\mathbb{E}[h(\Sigma_{\alpha,n}(b\alpha))].
$$
See also Gnedin and Pitman\cite{GnedinPitmanI} for other representations. The next result follows from our discussion.
\begin{prop}\label{mini}Suppose that the law of  $(K_{n,0},T_{\alpha,0})$ is determined by 
$\mathrm{PK}_{\alpha}(h\cdot f_{\alpha}).$ Then, 
\begin{enumerate}
\item[(i)]the joint distribution of $(K_{n,0},T_{\alpha,0})$ may be expressed as 
$$
\mathbb{P}_{\alpha,0}(K_{n}=b)\frac{f_{\alpha,(n,b)}(s)}{f_{\alpha}(s)}\times h(s)f_{\alpha}(s).
$$
\item[(ii)]Hence by Bayes rule the conditional distribution of $T_{\alpha,0}|K_{n,0}=b$ is given by $$
f^{(h)}_{T_{\alpha,0}|K_{n,0}}(t|b)=W^{-1}_{n,b}h(s)f_{\alpha,(n,b)}(s).$$
\item[(iii)]Statement [(ii)] implies that for any integrable function $g$
$$
W_{n,b}\mathbb{E}^{(h)}_{\alpha}[g(T_{\alpha,0})|K_{n,0}=b]=\mathbb{E}_{\alpha,0}[g(S_{\alpha,0})h(S_{\alpha,0})|K_{n,0}=b]
$$
where $\mathbb{E}_{\alpha,0}$ denotes expectation relative to the distribution of $(K_{n,0},S_{\alpha,0})$ defined under a $\mathrm{PD}(\alpha,0)$ distribution. This may be also expressed as
$$
\mathbb{E}^{(h)}_{\alpha}[g(T_{\alpha,0})|K_{n,0}=b]=\mathbb{E}[g(\Sigma_{\alpha,n}(b\alpha))h(\Sigma_{\alpha,n}(b\alpha))]W^{-1}_{n,b}
$$
\end{enumerate}
\end{prop}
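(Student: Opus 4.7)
The plan is to derive all three parts from the mixture definition $\mathrm{PK}_{\alpha}(h\cdot f_{\alpha})=\int_{0}^{\infty}\mathrm{PD}(\alpha|t)\,h(t)f_{\alpha}(t)\,dt$ combined with the Gibbs-type identity (\ref{Gibbspartition2}) already established in the excerpt, followed by a routine Bayes-rule unwinding.

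For (i), I would first note that under $\mathrm{PK}_{\alpha}(h\cdot f_{\alpha})$ the marginal density of $T_{\alpha,0}$ is $h(s)f_{\alpha}(s)$ (by construction), while the conditional law of $K_{n,0}$ given $T_{\alpha,0}=s$ is, by the mixture definition, exactly the one governing $K_{n}$ under $\mathrm{PD}(\alpha|s)$. But the displayed formula immediately preceding the proposition identifies this conditional mass function as $\mathbb{P}_{\alpha,0}(K_{n}=b)\,f_{\alpha,(n,b)}(s)/f_{\alpha}(s)$. Multiplying conditional by marginal gives the joint expression claimed in (i). Equivalently, one can integrate the EPPF (\ref{Gibbspartition}) against $h(s)f_{\alpha}(s)$ and use the identification $\mathbb{G}^{(n,b)}_{\alpha}(s)\prod_{j}\Gamma(n_{j}-\alpha)/\Gamma(1-\alpha)$ summed over compositions; both routes agree.

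For (ii), apply Bayes' rule. The marginal distribution of $K_{n,0}$ under $\mathrm{PK}_{\alpha}(h\cdot f_{\alpha})$ is $\mathbb{P}^{(h)}_{\alpha}(K_{n}=b)=W_{n,b}\,\mathbb{P}_{\alpha,0}(K_{n}=b)$, as recalled in the excerpt. Dividing the joint density from (i) by this marginal, the factors $\mathbb{P}_{\alpha,0}(K_{n}=b)$ and $f_{\alpha}(s)$ cancel, leaving $f^{(h)}_{T_{\alpha,0}|K_{n,0}}(s|b)=W_{n,b}^{-1}h(s)f_{\alpha,(n,b)}(s)$, which is part (ii). This also furnishes a self-consistency check: integrating this density against $1$ and comparing with $W_{n,b}=\mathbb{E}[h(\Sigma_{\alpha,n}(b\alpha))]$ reproduces the defining expression for $W_{n,b}$, since $f_{\alpha,(n,b)}$ is the density of $\Sigma_{\alpha,n}(b\alpha)$ by (\ref{stablecond}).

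For (iii), I would integrate $g(s)$ against the conditional density from (ii) to obtain $\mathbb{E}^{(h)}_{\alpha}[g(T_{\alpha,0})\mid K_{n,0}=b]=W_{n,b}^{-1}\int g(s)h(s)f_{\alpha,(n,b)}(s)\,ds$, which is the second displayed identity in (iii) once one recognizes the integral as $\mathbb{E}[g(\Sigma_{\alpha,n}(b\alpha))h(\Sigma_{\alpha,n}(b\alpha))]$. To get the first displayed identity in (iii), observe that specializing (i) to $h\equiv 1$ shows that under $\mathrm{PD}(\alpha,0)$ the conditional density of $S_{\alpha,0}$ given $K_{n,0}=b$ is precisely $f_{\alpha,(n,b)}(s)$; hence $\mathbb{E}_{\alpha,0}[g(S_{\alpha,0})h(S_{\alpha,0})\mid K_{n,0}=b]$ equals the same integral, and rearranging yields $W_{n,b}\,\mathbb{E}^{(h)}_{\alpha}[g(T_{\alpha,0})\mid K_{n,0}=b]=\mathbb{E}_{\alpha,0}[g(S_{\alpha,0})h(S_{\alpha,0})\mid K_{n,0}=b]$. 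There is no genuine obstacle here; the only care needed is in bookkeeping which measure each expectation is taken under, and in invoking (\ref{stablecond}) to translate between the $f_{\alpha,(n,b)}$ representation and expectations over $\Sigma_{\alpha,n}(b\alpha)$.
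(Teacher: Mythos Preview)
Your proposal is correct and follows essentially the same route as the paper: the paper's own proof is extremely terse, stating only that (i) and (ii) are ``simple consequences of our description of the conditional EPPF'' and that (iii) ``follows by a simple manipulation,'' which is precisely the mixture-plus-Bayes unwinding you spell out. You have simply filled in the bookkeeping that the paper leaves to the reader.
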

\begin{proof}
As we mentioned, the first two statements are just simple consequences of our description of the conditional EPPF. Statement [(iii)] follows by a simple manipulation of $s^{-\theta}f_{\alpha,(n,b)}(s)$.
\end{proof}
We now specialize~\label{mini} to the important $\mathrm{PD}(\alpha,\theta)$ case. This result was originally deduced by~\cite{HJL}.
\begin{prop}\label{mini2}Suppose that the law of  $(K_{n,0},T_{\alpha,0})$ are  determined by 
$\mathrm{PD}(\alpha,\theta)$ case where $h(s)=s^{-\theta}c_{\alpha,\theta}.$ Hence one may set $T_{\alpha,0}=S_{\alpha,\theta}.$ It follows that 
$$
W_{n,b}=c_{\alpha,\theta}\mathbb{E}[\Sigma^{-\theta}_{\alpha,n}(b\alpha)]=\frac{\Gamma(n)\Gamma(\theta+1)\Gamma(\frac{\theta+b\alpha}{\alpha})}{\Gamma(\theta+n)\Gamma(b)\Gamma(\frac{\theta+\alpha}{\alpha})}
$$
and
$W^{-1}_{n,b}s^{-\theta}c_{\alpha,\theta}f_{\alpha,(n,b)}(s)$ corresponds to the conditional density of the random variable $S_{\alpha,\theta}|K_{n,0}=b,$ which is equivalent in distribution to, 
$$
\Sigma_{\alpha,n}(\theta+b\alpha)
\overset{d}=S_{\alpha,\theta+b\alpha}B^{-1}_{\theta+b\alpha,n-b\alpha}\overset{d}=
S_{\alpha,n+\theta}B^{-1/\alpha}_{(\frac{\theta}{\alpha}+b,\frac{n}{\alpha}-b)},
$$
\end{prop}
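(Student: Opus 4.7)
The plan is to read this off directly from Proposition~\ref{mini} by specializing to $h(s)=s^{-\theta}c_{\alpha,\theta}$, which by~(\ref{PDdiversity}) is precisely the tilting that converts $S_{\alpha}$ into $S_{\alpha,\theta}$, so indeed $T_{\alpha,0}\stackrel{d}{=}S_{\alpha,\theta}$. I would split the verification into two pieces: (a) an explicit gamma-function evaluation of $W_{n,b}=c_{\alpha,\theta}\mathbb{E}[\Sigma_{\alpha,n}^{-\theta}(b\alpha)]$, and (b) a tilting identification of the conditional density in Proposition~\ref{mini}(ii) with that of $\Sigma_{\alpha,n}(\theta+b\alpha)$.

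For (a), I would use the first representation in~(\ref{stablecond}), $\Sigma_{\alpha,n}(b\alpha)\stackrel{d}{=}S_{\alpha,b\alpha}B^{-1}_{b\alpha,n-b\alpha}$, with the two factors independent. Applying~(\ref{moment}) with parameters $(\theta\mapsto b\alpha,\delta\mapsto\theta)$ gives
\begin{equation*}
\mathbb{E}[S^{-\theta}_{\alpha,b\alpha}]=\frac{\Gamma(b+\theta/\alpha+1)\,\Gamma(b\alpha+1)}{\Gamma(b\alpha+\theta+1)\,\Gamma(b+1)},
\end{equation*}
and the standard beta-moment formula gives $\mathbb{E}[B^{\theta}_{b\alpha,n-b\alpha}]=\Gamma(b\alpha+\theta)\Gamma(n)/[\Gamma(b\alpha)\Gamma(n+\theta)]$. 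Multiplying by $c_{\alpha,\theta}=\Gamma(\theta+1)/\Gamma(\theta/\alpha+1)$ and telescoping with $\Gamma(b\alpha+\theta+1)=(b\alpha+\theta)\Gamma(b\alpha+\theta)$, $\Gamma(b+\theta/\alpha+1)=(b+\theta/\alpha)\Gamma(b+\theta/\alpha)$, and $\Gamma(b\alpha+1)/\Gamma(b\alpha)=b\alpha$, the factor $b\alpha+\theta$ cancels and collapses to the displayed closed form with $\Gamma(\frac{\theta+\alpha}{\alpha})=\Gamma(\theta/\alpha+1)$ and $\Gamma(\frac{\theta+b\alpha}{\alpha})=\Gamma(b+\theta/\alpha)$.

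For (b), Proposition~\ref{mini}(ii) says the conditional density of $T_{\alpha,0}\mid K_{n,0}=b$ is proportional to $s^{-\theta}f_{\alpha,(n,b)}(s)$, i.e.\ obtained from the density of $\Sigma_{\alpha,n}(b\alpha)$ by polynomial tilting. Because $\Sigma_{\alpha,n}(b\alpha)\stackrel{d}{=}S_{\alpha,b\alpha}B^{-1}_{b\alpha,n-b\alpha}$ is a product of independent variables, tilting factorizes: $s^{-\theta}$-tilting $S_{\alpha,b\alpha}$ produces (up to normalization) $S_{\alpha,b\alpha+\theta}$ by~(\ref{PDdiversity}), while $u^{-\theta}$-tilting the inverse-beta variable $B^{-1}_{b\alpha,n-b\alpha}$ yields $B^{-1}_{b\alpha+\theta,n-b\alpha}$ by the elementary change of variables $b=1/u$ turning $u^{-\theta}$ into $b^{\theta}$ and reshuffling the beta parameters. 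Independence is preserved, so the tilted law is that of $S_{\alpha,\theta+b\alpha}B^{-1}_{\theta+b\alpha,n-b\alpha}\stackrel{d}{=}\Sigma_{\alpha,n}(\theta+b\alpha)$; the second equality in distribution is just~(\ref{stablecond}) read at the shifted index.

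The genuinely substantive piece is the gamma-ratio simplification in (a); once that is settled, (b) is essentially a one-line observation that product-tilting acts coordinate-wise on an independent factorization, combined with the fact already recorded in~(\ref{stablecond})--(\ref{Gibbspartition2}) that generalized Mittag--Leffler and polynomial tilts of stables commute with such factorizations. I would expect the simplification of the mixed $\alpha$ and $1/\alpha$ arguments in the $\Gamma$'s to be the only place where one has to be careful, and I would carry it out using the duplication identities $\Gamma(x+1)=x\Gamma(x)$ on $b\alpha+\theta$ and $b+\theta/\alpha$ separately so that the arithmetic relation $\alpha(b+\theta/\alpha)=b\alpha+\theta$ is what produces the cancellation.
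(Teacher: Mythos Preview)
Your proposal is correct and follows exactly the route the paper indicates: the paper gives no separate proof of Proposition~\ref{mini2}, simply presenting it as the specialization of Proposition~\ref{mini} to $h(s)=c_{\alpha,\theta}s^{-\theta}$ and citing~\cite{HJL}, which is precisely what you carry out in detail. One very minor point of phrasing in part~(b): the second distributional identity is not literally~(\ref{stablecond}) at a shifted value of $b$, but rather the same underlying identity from~\cite[eq.~(2.11)]{JamesLamperti} that justified the right-hand side of~(\ref{stablecond}), now applied with the pair $(\theta+b\alpha,\,n-b\alpha)$ in place of $(b\alpha,\,n-b\alpha)$.
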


We now present the main result in this section which follows from these facts. 

\begin{prop}\label{mainGibbs} Consider the variables described in Propoosition~\ref{JPPY} $(T^{-\alpha}_{\alpha,r})_{\{r\ge 0\}}.$ and consider the nested $\mathrm{PD}(\alpha,\theta)$ scheme where mergers and hence consistent partitions of $[n]$ occur at time points 
$S^{-\alpha}_{\alpha,\theta+r},$ for $r$ decreasing. 
Then conditioning on $S_{\alpha,\theta}=t,$ the law of the nested schemes are determined by a $\mathrm{PD}(\alpha|t)$ distribution, and in fact are equivalent in (conditional) distribution to a nested family of partitions that was initially constructed from any $\mathrm{PK}_{\alpha}(h\cdot f_{\alpha})$ distribution. Operationally, under $\mathrm{PD}(\alpha|t)$ the consistent family of nested exchangeable partitions can be generated for any $r\ge 1$ by generating a Chinese restaurant process partition of $[n]$ according to the Poisson Kingman law defined by mixing $\mathrm{PD}(\alpha|s)$ relative to the density of $T_{\alpha,r}|T_{\alpha,0}=t,$ which can be determined from Proposition ~\ref{JPPY}. Denote this density as $f^{(t)}_{\alpha,r}(s)=h_{r}(s|t)f_{\alpha}(s)$ 
and the corresponding Poisson Kingman law as 
$\mathrm{PK}_{\alpha}(f^{(t)}_{\alpha,r})$
This represents the configuration of the partition of $[n]$ at the time $T^{-\alpha}_{\alpha,r}.$ Mergers in the past occur at times 
$(T^{-\alpha}_{r-1},\ldots, T^{-\alpha}_{\alpha,0})|T^{-\alpha}_{\alpha, 0}=t^{-\alpha}$ according to simple bridges constructed from $(V_{r},\ldots,V_{1})|T_{\alpha,0}=t,$ producing partitions of $[n]$ with marginal laws determined by a $\mathrm{PK}_{\alpha}(f^{(t)}_{\alpha,j})$ EPPF for $j=r-1,\ldots,0.$ The distributional properties of the relevant quantities in Proposition 4.1 are now described.
\begin{enumerate}
\item[(i)]For each $r\ge 0,$ the conditional distribution of $K_{n,r}$ given $(T_{\alpha,r},\ldots T_{\alpha,0})$ only depends on $T_{\alpha,r}$ and is given by
$$
\mathbb{P}_{\alpha}(K_{n,r}=b|T_{\alpha,r}=s)=\frac{f_{\alpha,(n,b)}(s)}{f_{\alpha}(s)}\mathbb{P}_{\alpha,0}(K_{n}=b).
$$
\item[(ii)]$|A'_{1,j-1}|$ given $(K_{n,j},V_{j},T_{\alpha,j-1},\ldots T_{\alpha,0})$ has a  $\mathrm{Binomial}(K_{n,j},1-V_{j})$ distribution.
\item[(iii)] For any integer $j,$ the distribution of $(K_{n,j},V_{j})|(T_{\alpha,j-1}=s,\ldots T_{\alpha,0}=t_{0})$ only depends on $T_{\alpha,j-1}=s$ and is the same in form as the case where $j=1,$ with $T_{\alpha,0}=s.$
\item[(iv)] From [(i)] and using (\ref{transitionV2}),  the joint distribution of $(K_{n,1}=b,V_{1}))|T_{\alpha,0}=t$
$$
\mathbb{P}_{\alpha,0}(K_{n}=b)
\frac{f_{\alpha,(n,b)}(tv^{1/\alpha})}{f_{\alpha}(tv^{1/\alpha})}\times \frac{\alpha}{\Gamma(\frac{1-\alpha}{\alpha})}
\frac{(1-v)^{\frac{(1-\alpha)}{\alpha}-1}f_{\alpha}(v^{1/\alpha}t)}{tf_{\alpha}(t)}.
$$
which reduces to 
$$
\mathbb{P}_{\alpha,0}(K_{n}=b)
\frac{\alpha}{\Gamma(\frac{1-\alpha}{\alpha})}
\frac{(1-v)^{\frac{(1-\alpha)}{\alpha}-1}f_{\alpha,(n,b)}(tv^{1/\alpha})}{tf_{\alpha}(t)}.
$$
\item[(v)]Noting, from Proposition \ref{mini2}, that $t^{-1}f_{\alpha,(n,b)}(t)/
\mathbb{E}[\Sigma^{-1}_{\alpha,n}(b\alpha)]
$ is the density of $\Sigma_{\alpha,n}(1+b\alpha)$ it follows that
$$
\frac{\alpha t^{-1}}{1-\alpha}
\int_{0}^{1}{(1-v)^{\frac{(1-\alpha)}{\alpha}-1}f_{\alpha,(n,b)}(tv^{1/\alpha})dv}=\mathbb{E}[\Sigma^{-1}_{\alpha,n}(b\alpha)]
f_{Y_{\alpha,n}(b)}(t)
$$
where $f_{Y_{\alpha,n}(b)}(t)$ denotes the density of the random variable,
$$
Y_{\alpha,n}(b)\overset{d}=B^{-1/\alpha}_{(1,\frac{1-\alpha}{\alpha})}\Sigma_{\alpha,n}(1+b\alpha).
$$
\item[(vi)]$Y_{\alpha,n}(b)$ is the random variable corresponding to the conditional distribution of $S_{\alpha,0}|K_{n,1}=b,$ where $(S_{\alpha,0},K_{n,0},K_{n,1})$ follow a $\mathrm{PD}(\alpha,0)$ distribution, and otherwise $S_{\alpha,0}=S_{\alpha,1}B^{-1/\alpha}_{(1,\frac{1-\alpha}{\alpha})}.$
\item[(vii)]It follows that the conditional distribution of $K_{n,1}|T_{\alpha,0}=t$ is given by
$$
\mathbb{P}(K_{n,1}=b|T_{\alpha,0}=t)=
\mathbb{P}_{\alpha,0}(K_{n}=b)
\frac{(1-\alpha)\mathbb{E}[\Sigma^{-1}_{\alpha,n}(b\alpha)]
f_{Y_{\alpha,n}(b)}(t)}{\Gamma(\frac{1-\alpha}{\alpha})f_{\alpha}(t)}.
$$
\item[(viii)] Hence, the conditional density of $(1-V_{1})|K_{n,1}=b,T_{\alpha,0}=t,$
is given by, for $0<p<1,$
$$
p^{-2}\Lambda_{\alpha}(dp|b,t)=\frac{t^{-1}
\alpha}{(1-\alpha)}
\frac{p^{\frac{(1-\alpha)}{\alpha}-1}f_{\alpha,(n,b)}((1-p)^{1/\alpha}t)}{\mathbb{E}[\Sigma^{-1}_{\alpha,n}(b\alpha)]
f_{Y_{\alpha,n}(b)}(t)}dp,
$$
which is the same as the distribution of $(1-V_{k})|K_{n,k}=b,T_{\alpha,k-1}=t,$ for each $k\ge 1.$
\item[(ix)]It follows that the distribution of $|A'_{1,0}|$ given $K_{n,1}=b,T_{\alpha,0}=t$ is specified by its probability mass function
$$
\mathbb{P}_{\alpha}(|A'_{1,0}|=\ell|K_{n,1}=b,T_{\alpha,0}=t)
={b \choose \ell}
\int_{0}^{1}p^{\ell-2}(1-p)^{b-\ell}\Lambda_{\alpha}(dp|b,t).
$$
\end{enumerate}
\end{prop}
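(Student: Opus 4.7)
The plan is to work through parts (i)--(ix) in order, since each one is either an immediate consequence of the Markov structure of $(T_{\alpha,r})_{r\ge 0}$ established in Proposition \ref{JPPY} or an application of Bayes' rule using the EPPF identities from Propositions \ref{mini} and \ref{mini2}. First I would dispose of (i) and (iii) together: conditionally on $T_{\alpha,r}=s$ the partition at level $r$ is governed by $\mathrm{PD}(\alpha|s)$, so by Proposition \ref{mini}(i) the law of $K_{n,r}$ given $T_{\alpha,r}=s$ is $\mathbb{P}_{\alpha,0}(K_n=b)f_{\alpha,(n,b)}(s)/f_\alpha(s)$; the extra conditioning on $(T_{\alpha,0},\ldots,T_{\alpha,r-1})$ drops out because those older states are not used in the forward generation of the level-$r$ restaurant, and the Markov property then upgrades this to level $j$ for (iii). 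Part (ii) is immediate from the construction of the nested scheme: given $V_j$ (equivalently given $B_j$ in the $\mathrm{PD}(\alpha,\theta)$ case), each of the $K_{n,j}$ existing blocks is independently assigned to $A'_{1,j-1}$ with probability $1-V_j$ via the atom of $\lambda_j$, so $|A'_{1,j-1}|\sim\mathrm{Bin}(K_{n,j},1-V_j)$.

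For (iv), I would combine (i) with the transition density (\ref{transitionV2}): since $T_{\alpha,1}=V_1^{1/\alpha}T_{\alpha,0}$, conditioning on $T_{\alpha,0}=t$ and $V_1=v$ means $T_{\alpha,1}=tv^{1/\alpha}$, and plugging this value into the formula from (i) multiplies cleanly against (\ref{transitionV2}). The factor $f_\alpha(tv^{1/\alpha})$ that appears in both numerator (from (i)) and denominator (from the conditional density of $V_1$) cancels, leaving the second displayed expression.

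Parts (v)--(vii) are where the real computation sits, and I expect (v) to be the main obstacle. The plan is to recognize that $f_{\alpha,(n,b)}(s)$, viewed as $s^{-1}\cdot [s^{-(n-1)}f_\alpha(s)]$, is proportional to $s^{-1}$ times the density of $\Sigma_{\alpha,n}(1+b\alpha)$, with the right proportionality constant being exactly $\mathbb{E}[\Sigma^{-1}_{\alpha,n}(b\alpha)]$ (this is the Bayes-rule constant in Proposition \ref{mini2} for the polynomial tilt $s\mapsto s^{-1}$). Performing the substitution $s=tv^{1/\alpha}$ in the integral, the Beta$(1,\frac{1-\alpha}{\alpha})$ weight in $v$ then produces the size-biased representation $Y_{\alpha,n}(b)\overset{d}{=}B^{-1/\alpha}_{(1,\frac{1-\alpha}{\alpha})}\Sigma_{\alpha,n}(1+b\alpha)$. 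The interpretation (vi) then follows by applying Proposition \ref{mini2} with $\theta=0$ to the level-$1$ partition and using the recursion $S_{\alpha,0}=S_{\alpha,1}B^{-1/\alpha}_{(1,\frac{1-\alpha}{\alpha})}$ from (\ref{keyID1}). Part (vii) is just the marginalization of (iv) in $v$ with the identification from (v) substituted in.

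Finally, (viii) is a direct application of Bayes' rule: divide the joint density in (iv) (rewritten via the change of variables $p=1-v$) by the marginal in (vii), which produces precisely the stated kernel $\Lambda_\alpha(dp|b,t)$ after the $\mathbb{P}_{\alpha,0}(K_n=b)$ factors cancel. Part (ix) then follows by integrating the $\mathrm{Bin}(b,p)$ kernel from (ii) against $\Lambda_\alpha(\cdot|b,t)$, the $p^{-2}$ factor in $\Lambda_\alpha$ being absorbed into the Binomial weight $p^\ell$ to leave $p^{\ell-2}$. The only delicate step throughout is the identification in (v); everything else is routine given the Markov structure and the EPPF calculus developed earlier in the section.
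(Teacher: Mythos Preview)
Your proposal is correct and matches the paper's intent: the paper does not give an explicit proof of this proposition, stating only that it ``follows from these facts'' (i.e., the Markov transition in Proposition~\ref{JPPY} together with the EPPF and tilting identities of Propositions~\ref{mini} and~\ref{mini2}). Your sketch fills in precisely those steps---the Markov property for (i)--(iii), the cancellation of $f_\alpha(tv^{1/\alpha})$ for (iv), the polynomial-tilt identification of $\Sigma_{\alpha,n}(1+b\alpha)$ and the product-density computation for (v), and straightforward Bayes' rule and marginalization for (vi)--(ix)---so there is nothing to add.
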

We close this section with the following corollary.
\begin{cor}Suppose that $T_{\alpha,0}$ has density 
$h(t)f_{\alpha}(t)$ then, 
\begin{enumerate}
\item[(i)]the conditional density of $T_{\alpha,0}|K_{n,1}=b$ is 
$$
f_{\alpha,K_{n,1}}(t|b)=\frac{h(t)
f_{Y_{\alpha,n}(b)}(t)}{\mathbb{E}[h(Y_{\alpha,n}(b))]}
$$
\item[(ii)]It follows that the conditional density of $(1-V_{1})|K_{n,1}=b$
is given by, for $0<p<1,$
$$
p^{-2}\Lambda_{\alpha}(dp|b)=\frac{
\alpha p^{\frac{(1-\alpha)}{\alpha}-1}}{(1-\alpha)}
\frac{
\mathbb{E}[h(\Sigma_{\alpha,n}(1+b\alpha)(1-p)^{-1/\alpha})]}{
\mathbb{E}[h(Y_{\alpha,n}(b))]}dp.
$$
\item[(iii)]The conditional density of $(1-V_{1})|K_{n,1}=b$
can also be expressed as, for $0<p<1,$ 
$$
p^{-2}\Lambda_{\alpha}(dp|b)=\frac{
\alpha p^{\frac{(1-\alpha)}{\alpha}-1}}{(1-\alpha)}
\frac{
\mathbb{E}_{\alpha,0}[h(S_{\alpha,1}(1-p)^{-1/\alpha})|K_{n,1}=b]}{
\mathbb{E}_{\alpha,0}[h(S_{\alpha,0})|K_{n,1}=b]}dp.
$$

\item[(iv)] Hence when $h(t)=t^{-\theta}c_{\alpha,\theta},$ corresponding to the $\mathrm{PD}(\alpha,\theta)$ case, the conditional distribution  of $(1-V_{1})|K_{n,1}=b$ has a  $\mathrm{Beta}(\frac{1-\alpha}{\alpha},\frac{\theta+\alpha}{\alpha})$ distribution independent of $K_{n,1}.$
\end{enumerate}
\end{cor}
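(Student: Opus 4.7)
The entire corollary is a Bayes-rule unpacking of Proposition~\ref{mainGibbs} against the marginal density $h(t)f_{\alpha}(t)$ of $T_{\alpha,0}.$ For (i), I would multiply $\mathbb{P}(K_{n,1}=b\mid T_{\alpha,0}=t)$ from Proposition~\ref{mainGibbs}(vii) by $h(t)f_{\alpha}(t);$ the $f_{\alpha}(t)$ in the denominator cancels, leaving a joint in $(b,t)$ that is proportional (in $t$) to $h(t)f_{Y_{\alpha,n}(b)}(t).$ Integrating out $t$ produces the normalising constant $\mathbb{E}[h(Y_{\alpha,n}(b))],$ and dividing yields the conditional density in (i).

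For (ii), I begin with the joint law of $(K_{n,1}=b,V_{1},T_{\alpha,0})$ obtained by multiplying Proposition~\ref{mainGibbs}(iv) by $h(t)f_{\alpha}(t),$ so that (after cancellation of $f_{\alpha}$) it is proportional to
\begin{equation*}
\mathbb{P}_{\alpha,0}(K_{n}=b)\,\frac{\alpha(1-v)^{\frac{1-\alpha}{\alpha}-1}}{\Gamma(\frac{1-\alpha}{\alpha})}\,\frac{h(t)\,f_{\alpha,(n,b)}(tv^{1/\alpha})}{t}.
\end{equation*}
To marginalise $T_{\alpha,0}$ I apply the change of variable $s=tv^{1/\alpha},$ which reduces the $t$-integral to $\int_{0}^{\infty}h(sv^{-1/\alpha})s^{-1}f_{\alpha,(n,b)}(s)\,ds.$ Invoking the identity $s^{-1}f_{\alpha,(n,b)}(s)=\mathbb{E}[\Sigma^{-1}_{\alpha,n}(b\alpha)]\,f_{\Sigma_{\alpha,n}(1+b\alpha)}(s)$ recorded in Proposition~\ref{mini2}, the integral becomes $\mathbb{E}[\Sigma^{-1}_{\alpha,n}(b\alpha)]\,\mathbb{E}[h(\Sigma_{\alpha,n}(1+b\alpha)v^{-1/\alpha})].$ Dividing by the marginal of $K_{n,1}=b$ from (i), cancelling the common $\mathbb{P}_{\alpha,0}(K_{n}=b)\mathbb{E}[\Sigma^{-1}_{\alpha,n}(b\alpha)]/\Gamma(\frac{1-\alpha}{\alpha})$ prefactor, and substituting $p=1-v$ delivers (ii).

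Part (iii) is a re-interpretation of the two expectations appearing in (ii). By Proposition~\ref{mainGibbs}(vi), under $\mathrm{PD}(\alpha,0)$ the variable $V_{1}$ is independent of $(K_{n,1},S_{\alpha,1})$ with $V_{1}\sim\mathrm{Beta}(1,\tfrac{1-\alpha}{\alpha})$ and $S_{\alpha,1}\mid K_{n,1}=b\overset{d}{=}\Sigma_{\alpha,n}(1+b\alpha),$ so $\Sigma_{\alpha,n}(1+b\alpha)(1-p)^{-1/\alpha}$ represents the value of $S_{\alpha,1}V_{1}^{-1/\alpha}=S_{\alpha,0}$ at $V_{1}=1-p,$ while $Y_{\alpha,n}(b)$ is the conditional law of $S_{\alpha,0}\mid K_{n,1}=b.$ Rewriting gives (iii). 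For (iv), I substitute $h(t)=c_{\alpha,\theta}t^{-\theta}$ into (ii): both $\mathbb{E}[h(\Sigma_{\alpha,n}(1+b\alpha)(1-p)^{-1/\alpha})]=c_{\alpha,\theta}(1-p)^{\theta/\alpha}\mathbb{E}[\Sigma_{\alpha,n}^{-\theta}(1+b\alpha)]$ and $\mathbb{E}[h(Y_{\alpha,n}(b))]=c_{\alpha,\theta}\mathbb{E}[B_{(1,(1-\alpha)/\alpha)}^{\theta/\alpha}]\,\mathbb{E}[\Sigma_{\alpha,n}^{-\theta}(1+b\alpha)]$ separate as products, so the $\Sigma$-factor cancels and the ratio reduces to $(1-p)^{\theta/\alpha}/\mathbb{E}[B_{(1,(1-\alpha)/\alpha)}^{\theta/\alpha}].$ Evaluating that beta moment by the standard Gamma calculus collapses the prefactors in (ii) to the normaliser of the $\mathrm{Beta}(\tfrac{1-\alpha}{\alpha},\tfrac{\theta+\alpha}{\alpha})$ density in $p,$ independent of $b.$

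The only delicate step is bookkeeping: one must track the Gamma and $\alpha,(1-\alpha)$ prefactors carefully through the change of variable in (ii) and the cancellation in (iv). No new analytic ingredient beyond Bayes' rule, Proposition~\ref{mainGibbs}(iv),(vi),(vii), and the density identity of Proposition~\ref{mini2} is required.
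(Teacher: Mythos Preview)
Your proposal is correct and is precisely the derivation the paper has in mind: the corollary is stated without proof as a direct consequence of Proposition~\ref{mainGibbs}, and your Bayes' rule computation against the prior $h(t)f_{\alpha}(t)$, together with the change of variable $s=tv^{1/\alpha}$ and the tilting identity $s^{-1}f_{\alpha,(n,b)}(s)=\mathbb{E}[\Sigma^{-1}_{\alpha,n}(b\alpha)]f_{\Sigma_{\alpha,n}(1+b\alpha)}(s)$, is exactly how one fills in the details. One minor citation point: that density identity is recorded in Proposition~\ref{mainGibbs}(v) (which invokes Proposition~\ref{mini2}), rather than in Proposition~\ref{mini2} itself.
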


\subsection{Limits of joint vectors under the nested $\mathrm{PD}(\alpha,\theta)$ scheme}
Our constructions allow one to utilize well-known results to easily establish limit theorems. We shall present one such result here. We again note that these results hold in the more general 
$\mathrm{PK}_{\alpha}(h\cdot f_{\alpha})$ setting

\begin{prop}\label{limit}Let $(S^{-\alpha}_{\alpha,\theta+r})_{\{r\ge 0\}}$ denote the $\mathrm{PD}(\alpha,\theta)$ sequence assocated with the nested scheme. For each $r\ge 0$ let $(K_{n,r}),$ $(\xi_{n,0},\ldots,\xi_{n,r})$ be as previously defined. Then
\begin{enumerate}
\item[(i)] As $n\rightarrow \infty,$ jointly and component-wise, for any $r\ge 0$
$$
n^{-\alpha}(K_{n,0},K_{n,1},\ldots,K_{n,r})\overset{a.s.}\rightarrow (S^{-\alpha}_{\alpha,\theta},\ldots,S^{-\alpha}_{\alpha,\theta+r})
$$ 
\item[(ii)] As $n\rightarrow \infty,$ jointly and component-wise, for any $r\ge 0$
$$
n^{-\alpha}(\xi_{n,0},\xi_{n,1},\ldots,\xi_{n,r})\overset{a.s.}\rightarrow 
(\xi_{0},\xi_{1},\ldots,\xi_{r})
$$
where $\xi_{0}=S^{-\alpha}_{\alpha,\theta},$ and $\xi_{j}=S^{-\alpha}_{\alpha,\theta+j}-S^{-\alpha}_{\alpha,\theta+j-1}$, for $j=1,\ldots r.$
\item[(iii)] One may replace $\xi_{n,0}$ with $K_{n,0}$
\end{enumerate}
\end{prop}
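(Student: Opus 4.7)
The plan is to proceed by induction on $r$, leveraging the fact that almost-sure marginal convergence for finitely many coordinates automatically lifts to joint almost-sure convergence, and then identifying the couplings of the limits through the nested scheme's recursive structure.

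For the base step and for each marginal, the classical Pitman-Yor theorem (stated in the introduction) gives $n^{-\alpha} K_{n,j}\xrightarrow{a.s.} S^{-\alpha}_{\alpha,\theta+j}$ for every fixed $j$, since by construction of the nested scheme the marginal law of $K_{n,j}$ is that of the block count of a $\mathrm{PD}(\alpha,\theta+j)$ partition of $[n]$. Hence part (i) holds component-wise; the content is that the right-hand sides come from the coupled driving sequence $(S^{-\alpha}_{\alpha,\theta+r})_{r\ge 0}$ rather than from independent copies.

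To verify the coupling, I would use the inductive step: assuming $n^{-\alpha}K_{n,j}\xrightarrow{a.s.} S^{-\alpha}_{\alpha,\theta+j}$, I invoke Proposition \ref{propK}(ii), which says that conditionally on $(K_{n,j},B_j)$ the variable $|A'_{1,j-1}|$ is $\mathrm{Bin}(K_{n,j},1-B_j)$, together with Proposition \ref{propK}(i). Since $B_j\in(0,1)$ a.s.\ and $K_{n,j}\to\infty$ a.s., a conditional strong law of large numbers gives $|A'_{1,j-1}|/K_{n,j}\xrightarrow{a.s.}1-B_j$ (after conditioning on the $\sigma$-field generated by $(S^{-\alpha}_{\alpha,\theta+r})_{r\ge 0}$ and the driving beta variables $B_j$, the Binomial trials form an ordinary triangular array converging by Borel-Cantelli). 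Then Proposition \ref{propK}(i) yields
$$
n^{-\alpha}K_{n,j-1}=n^{-\alpha}K_{n,j}-n^{-\alpha}(|A'_{1,j-1}|-1)\mathbb{I}_{\{|A'_{1,j-1}|\ge 2\}}+O(n^{-\alpha})\xrightarrow{a.s.}S^{-\alpha}_{\alpha,\theta+j}B_j,
$$
and the defining recursion (\ref{keyID2}) identifies this limit as $S^{-\alpha}_{\alpha,\theta+j-1}$, completing the induction and establishing (i).

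Part (ii) follows immediately: for $j\ge 1$, $\xi_{n,j}=K_{n,j}-K_{n,j-1}$ a.s.\ eventually (since $|A'_{1,j-1}|\ge 2$ eventually, as $|A'_{1,j-1}|\to\infty$ a.s.), so the limit is the telescoping difference $S^{-\alpha}_{\alpha,\theta+j}-S^{-\alpha}_{\alpha,\theta+j-1}=\xi_j$; and for $j=0$, the same eventual event yields $\xi_{n,0}=K_{n,0}$ a.s.\ eventually, giving both (ii) for the zeroth coordinate and (iii). The one genuinely delicate point, which I expect to be the main obstacle, is rigorously justifying the conditional SLLN in the inductive step despite the random sample sizes $K_{n,j}$: the cleanest route is to condition on the entire $\sigma$-field of the driving Markov chain $(S^{-\alpha}_{\alpha,\theta+r})$ and the beta factors $B_j$, so that only the combinatorial randomness of the Binomial thinning remains, and then combine standard Binomial concentration with the fact that $K_{n,j}\to\infty$ a.s.\ via a Borel-Cantelli argument on a polynomial subsequence $n^{\alpha}\uparrow\infty$.
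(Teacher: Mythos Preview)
Your proposal is correct and follows essentially the same route as the paper: start from the top-level marginal limit $n^{-\alpha}K_{n,r}\to S^{-\alpha}_{\alpha,\theta+r}$, condition on (``fix'') the beta variables $(B_j)$, exploit the $\mathrm{Bin}(K_{n,j},1-B_j)$ thinning structure from Proposition~\ref{propK} to write $K_{n,j-1}/K_{n,j}$ as a Bernoulli sample mean, apply the law of large numbers, and iterate downward using the recursion $S^{-\alpha}_{\alpha,\theta+j-1}=S^{-\alpha}_{\alpha,\theta+j}B_j$. The paper compresses all of this into a few lines; your explicit flag on the conditional SLLN with random summation index $K_{n,j}$ is the one point the paper waves through, and your suggested conditioning/Borel--Cantelli remedy is reasonable.
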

\begin{proof}It suffices to show [(i)] as the arguments for [(ii)] are similar. We first note the fact that $n^{-\alpha}K_{n,r}\overset{a.s.}\rightarrow S^{-\alpha}_{\alpha,\theta+r}.$ We may also consider the $(B_{j})$ to be fixed. Using the notation 
$X\overset{a.s.}\sim Y$ to mean $X/Y\rightarrow 1$ a.s., it follows from Proposition~\ref{propK} that
$K_{n,r-1}\overset{a.s.}\sim Y_{r},$ where $Y_{r}=\sum_{k=1}^{K_{n,r}}b_{k,r},$ for $b_{k,r}$ iid Bernoulli($(B_{r})$) variables. Dividing by $K_{n,r}$ it follows that $Y_{r}=K_{n,r}\bar{Y}_{r},$ hence $n^{-\alpha}(K_{n,r-1},K_{n,r}) \overset{a.s.}\sim  n^{-\alpha}K_{n,r}(\bar{Y}_{r},1)\overset{a.s.}\sim S^{-\alpha}_{\alpha,\theta+r}(B_{r},1).,$ which is $(S^{-\alpha}_{\alpha,\theta+r-1},S^{-\alpha}_{\alpha,\theta+r})$ Continuing in this way it follows that 
$K_{n,j-1}\overset{a.s.}\sim K_{n,r}\prod_{j=1}^{r}\bar{Y}_{j}$ for $j=1,\ldots,r.$ The result is concluded by applying the law of large numbers to a vector of Bernoulli sample means and utilizing the definition of  $(S^{-\alpha}_{\alpha,\theta+r})_{\{r\ge 0\}}.$ 
\end{proof}
By taking $\alpha\rightarrow 0$ and known results for $PD(0,\theta)$ distributions we obtain the following corollary
\begin{cor}Set $\alpha=0$ in~Proposition~\ref{limit}
Then this yields results for a  $\mathrm{PD}(0,\theta)$ nested scheme as follows:
\begin{enumerate}
\item[(i)] As $n\rightarrow \infty,$ jointly and component-wise, for any $r\ge 0$
$$
(K_{n,0},K_{n,1},\ldots,K_{n,r})\overset{a.s.}\sim \log(n)(\theta,\theta+1,\ldots,\theta+r)
$$ 
\item[(ii)] As $n\rightarrow \infty,$ jointly and component-wise, for any $r\ge 0$
$$
(\xi_{n,0},\xi_{n,1},\ldots,\xi_{n,r})\overset{a.s.}\sim \log(n)
(\theta,1,\ldots,1).
$$
\end{enumerate}
\end{cor}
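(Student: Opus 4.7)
The plan is to reduce everything to the classical Korwar--Hollander result $K_{n}/\log(n)\to\theta$ a.s.\ for the number of blocks of a $\mathrm{PD}(0,\theta)$ partition of $[n]$, and to exploit the fact that the limit vectors in this corollary are deterministic. For each fixed $r\ge 0$ the marginal law of $K_{n,r}$ is that of the block count of a $\mathrm{PD}(0,\theta+r)$ partition of $[n]$, so $K_{n,r}/\log(n)\to\theta+r$ almost surely. Joint almost-sure convergence to a deterministic vector is equivalent to componentwise almost-sure convergence, so~(i) is immediate.

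For~(ii) and $j\ge 1$, Proposition~\ref{propK}(i) gives $\xi_{n,j}=K_{n,j}-K_{n,j-1}$ on the event $\{|A'_{1,j-1}|\ge 2\}$, so dividing by $\log(n)$ and using~(i) yields $\xi_{n,j}/\log(n)\to(\theta+j)-(\theta+j-1)=1$, provided one verifies that $\mathbb{I}_{\{|A'_{1,j-1}|\ge 2\}}\to 1$ a.s. This verification is the only non-routine ingredient: as $\alpha\to 0$ the Beta variable $B_{j}\sim\mathrm{Beta}(\frac{\theta+\alpha+j-1}{\alpha},\frac{1-\alpha}{\alpha})$ concentrates at $(\theta+j-1)/(\theta+j)$, so in the $\mathrm{PD}(0,\theta)$ limit the mixed-binomial success probability $1-B_{j}$ is the deterministic constant $1/(\theta+j)>0$. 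Combining this with Proposition~\ref{propK}(ii) and $K_{n,j}\to\infty$ a.s., the law of large numbers for Bernoulli sums gives $|A'_{1,j-1}|/K_{n,j}\to 1/(\theta+j)$ a.s., hence $|A'_{1,j-1}|\to\infty$ a.s., as required. The same argument with $j=1$ shows the indicator in~(\ref{empiricaldegroo0}) is eventually $1$, so $\xi_{n,0}=K_{n,0}$ for all large $n$ a.s., and $\xi_{n,0}/\log(n)\to\theta$.

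The main (and only real) obstacle is controlling the indicators $\mathbb{I}_{\{|A'_{1,j-1}|\ge 2\}}$ in the $\alpha=0$ setting, since in the $\alpha>0$ version of Proposition~\ref{limit} they played no binding role. The resolution above---concentration of $B_{j}$ at a value strictly less than $1$ combined with the divergence of $K_{n,j}$---handles this cleanly, and everything else is a mechanical application of Proposition~\ref{limit} and Proposition~\ref{propK} specialized at $\alpha=0$.
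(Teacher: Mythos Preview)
The paper offers no formal proof of this corollary; it is introduced only with the sentence ``By taking $\alpha\rightarrow 0$ and known results for $\mathrm{PD}(0,\theta)$ distributions we obtain the following corollary.'' Your argument correctly supplies the details behind that sentence: the Korwar--Hollander asymptotic $K_{n}/\log(n)\to\theta$ applied marginally at each level (legitimate because each layer of the nested scheme is a consistent exchangeable $\mathrm{PD}(0,\theta+j)$ partition), together with the observation that componentwise a.s.\ convergence to a deterministic vector gives joint convergence, is exactly what the paper's one-line justification unpacks to.

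One small simplification: for $j\ge 1$ the identity $\xi_{n,j}=K_{n,j}-K_{n,j-1}$ in~(\ref{empiricaldegree}) is the \emph{definition}, not something that holds only on $\{|A'_{1,j-1}|\ge 2\}$; the second expression there is a consequence of Proposition~\ref{propK}(i). So your indicator analysis is needed only for $\xi_{n,0}$ via~(\ref{empiricaldegroo0}), and your argument for that case (Proposition~4.2(ii) gives a fixed success probability $p_{\theta+j-1}=1/(\theta+j)$, $K_{n,j}\to\infty$, hence $|A'_{1,j-1}|\to\infty$ a.s.) is exactly right.
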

\section{The $\mathrm{PD}(1/2|t)$ case}
We now describe all possible limits under any scheme that is asymptotically equivalent to our nested schemes based on $PK_{1/2}(h\cdot f_{1/2})$ distributions.  This is done by looking at their common generator under the $\mathrm{PD}(1/2|t)$ model. Note again that if $K_{n}$ is the number of distinct blocks in a partition of $[n]$ following a $\mathrm{PD}(1/2|t)$ distribution then 
$n^{-1/2}K_{n}\overset{a.s.}\sim 1/\sqrt{t}.$ 
See Pitman~\cite[Section 8]{Pit02} for a nice treatment in regards to the Brownian excursion partition. The simple properties of the $\mathrm{PD}(1/2|t)$ distribution related to the Markov chain in~Perman,Pitman and Yor\cite{PPY92} are also exploited in~\cite{AldousPit, BerBrown,Pit06}. What we are doing here is expressing properties of this distribution relative to operations corresponding to
Proposition~\ref{JPPY}.   
Recall that 
$S_{1/2,0}\overset{d}=S_{1/2}\overset{d}=1/(4G_{1/2}),$ where $G_{1/2}$ is a $\mathrm{Gamma}(1/2,1)$ variable. This means that 
$$
f_{1/2}(t)=\frac{1}{2\Gamma(1/2)}t^{-3/2}{\mbox e}^{-1/4t}
$$
Throughout, let  $(\textbf{e}_{k})$ denote an iid sequence of exponential(1) variables. 

\begin{prop}\label{Browniancase}Consider the setting in Proposition~\ref{JPPY} then the joint distribution of $(T_{1/2,k})_{\{k\ge 1\}}|T_{1/2,0}=t$ 
is described as follows. 
\begin{enumerate}
\item[(i)]Setting $\alpha=1/2$ in~(\ref{transitionV}), 
for each $k\ge 1,$ the conditional density of $T_{1/2,k}|T_{1/2,k-1}=t_{k-1}$ is given by
$$
f_{T_{1/2,k}|T_{1/2,k-1}}(t_{k}|t_{k-1})=\frac{1}{4}t^{-2}_{k}{\mbox e}^{-\frac{1}{4t_{k}}}{\mbox e}^{\frac{1}{4t_{k-1}}} {\mbox { for }}t_{k}<t_{k-1}.
$$ 
\item[(ii)]It follows that $(T_{1/2,k})_{\{k\ge 1\}}|T_{1/2,0}=t$ correspond to the points of an inhomogeneous Poisson point process with intensity
\begin{equation}
\tau(s|t)=\frac{1}{4}s^{-2}\mathbb{I}_{\{s<t\}}
\label{intensity1}
\end{equation}
\item[(iii)]Hence, conditional on $T_{1/2,0}=t,$ for each $k\ge 1,$ one can set
$$
T_{1/2,k}=\frac{1}{4\sum_{\ell=1}^{k}\textbf{e}_{\ell}+1/t}
$$
\item[(iv)]From~(\ref{intensity1}) it follows that 
$2^{-1/2}(T^{-1/2}_{1/2,k})_{\{k\ge 1\}}|T_{1/2,0}=t$  are the points of an inhomogeneous Poisson point process with intensity
\begin{equation}
\rho(y|t)=y\mathbb{I}_{\{y>1/\sqrt{2t}\}}
\label{intensity2}
\end{equation}
\end{enumerate}
\end{prop}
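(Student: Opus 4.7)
The plan is to obtain everything by direct substitution of $\alpha=1/2$ into the transition density (\ref{transitionV}) furnished by Proposition~\ref{JPPY}, and then to recognize the resulting structure as a standard Poisson construction after one clever change of variable. Nothing deeper than elementary calculus should be needed, because $f_{1/2}$ is fully explicit.

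For part [(i)], I would set $\alpha=1/2$ in (\ref{transitionV}). Two simplifications are striking: $(1-\alpha)/\alpha=1$, so $\Gamma((1-\alpha)/\alpha)=1$ and the factor $(1-(s/t)^{\alpha})^{(1-\alpha)/\alpha - 1}$ collapses to $1$. Using the explicit form of $f_{1/2}$ displayed just before the proposition, the density ratio $f_{1/2}(s)/f_{1/2}(t)$ contributes $(s/t)^{-3/2}\,e^{1/(4t)-1/(4s)}$. Combining the prefactor $\alpha^{2}=1/4$, the $(s/t)^{\alpha-1}=(s/t)^{-1/2}$ term and the $t^{-2}$ yields the announced density $\tfrac{1}{4}s^{-2}e^{-1/(4s)}e^{1/(4t)}$. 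The Markov property from Proposition~\ref{JPPY} upgrades this to the transition from $T_{1/2,k-1}$ to $T_{1/2,k}$ for every $k\ge 1$.

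For parts [(ii)] and [(iii)], the key move is the change of variable $U_{k}:=1/(4T_{1/2,k})$. Because the Jacobian $|dT/dU|=4T^{2}$ exactly cancels the $s^{-2}/4$ prefactor from [(i)], the transition density of $U_{k}$ given $U_{k-1}$ reduces to $e^{-(U_{k}-U_{k-1})}\mathbb{I}_{\{U_{k}>U_{k-1}\}}$. Hence the increments $U_{k}-U_{k-1}$ are i.i.d.\ Exponential$(1)$, and $(U_{k})_{k\ge 1}$ forms a standard rate-$1$ Poisson point process on $(1/(4t),\infty)$ started at $U_{0}=1/(4t)$. Transporting back by $s=1/(4u)$ gives the claimed inhomogeneous intensity $\tau(s|t)=s^{-2}/4$ on $s<t$, while inverting $U_{k}=1/(4t)+\sum_{\ell=1}^{k}\textbf{e}_{\ell}$ yields the explicit representation of [(iii)].

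Part [(iv)] is a second mapping of the Poisson process: with $Y=(2T)^{-1/2}$ one has $dT/dY=-Y^{-3}$ and $1/(4T^{2})=Y^{4}$, so the intensity transforms as $\tau(s|t)\,ds\mapsto Y\,dY$ on $Y>1/\sqrt{2t}$, as required. The only real obstacle is bookkeeping: one must carefully track Jacobians and signs through the two changes of variables, since a misplaced power of $2$ would destroy the clean identification with a homogeneous Poisson process — the conceptual content reduces entirely to the cancellation $s^{-2}/4 \cdot 4T^{2}\equiv 1$ that makes the $U_{k}$ increments exponential.
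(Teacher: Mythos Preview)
Your proposal is correct and follows exactly the line the paper indicates: the paper's own proof consists of a single sentence saying the results follow from the simplifications peculiar to $\alpha=1/2$ in the transition density~(\ref{transitionV}) via the explicit form of $f_{1/2}$. Your write-up simply spells out those simplifications in detail --- in particular the change of variable $U_{k}=1/(4T_{1/2,k})$ that renders the increments i.i.d.\ exponential --- which is precisely what the paper leaves to the reader.
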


\begin{proof}The results follow by the simplifications unique to the $\alpha=1/2$ case via the density $f_{1/2}(t)$ as applied to the transition density in~(\ref{transitionV}). 
\end{proof}

\begin{rem}
Setting $C_{1}=1/\sqrt{2t},$  $C_{k+1}=1/\sqrt{2T_{1/2,k}},$ for $k\ge 1,$ the 
spacings $C_{k+1}-C_{k}$ can be interpreted as the distribution of the edge lengths in Aldous's\cite[p.278]{AldousCRTIII} construction of the Brownian Random Tree (CRT) when conditioned on $C_{1}=1/\sqrt{2t}.$ As is well known Aldous's construction corresponds to randomizing in the $\mathrm{PD}(1/2,1/2)$ case. However, using our explicit descriptions in Propositions~\ref{Browniancase}, such a construction makes sense for any choice of $t,$ yielding different trees.
\end{rem}

We next describe some implications of Proposition~\ref{Browniancase}.

\begin{prop}\label{PDBrown}For $\alpha=1/2,$ consider the setting in Proposition~\ref{JPPY}. The initial mass partition $(P_{k,0})|T_{1/2,0}=t$ follows a $\mathrm{PD}(1/2|t)$ distribution. A description of the corresponding variables is provided as follows.  Given, $T_{1/2,0}=t.$
\begin{enumerate}
\item[(i)]$T^{-1/2}_{1/2,1}=\sqrt{4\textbf{e}_{1}+1/t}$ and for $k\ge 2,$
$$
T^{-1/2}_{1/2,k}=\sqrt{4\sum_{\ell=1}^{k}\textbf{e}_{\ell}+1/t}
$$
\item[(ii)]$V_{1}=1/\sqrt{4t\textbf{e}_{1}+1},$  with density, for $0<u<1,$
\begin{equation}
f_{V_{1}}(u|t)=\frac{1}{2t}u^{-3}{\mbox e}^{-\frac{(1-u^{2})}{4t u^{2}}}
\label{denV}
\end{equation}
\item[(iii)]For $k\ge 2,$
$$
V_{k}=\frac{\sqrt{4\sum_{\ell=1}^{k-1}\textbf{e}_{\ell}+1/t}}{\sqrt{4\sum_{\ell=1}^{k}\textbf{e}_{\ell}+1/t}}
$$
\item[(iv)]For every $r\ge 0,$ there exist random vectors $(\xi_{n,0},\xi_{n,1},\ldots,\xi_{n,r})$ defined under a $\mathrm{PD}(1/2|t)$ partition scheme, as in (\ref{empiricaldegree}) and (\ref{empiricaldegroo0}) such that, 
$$
n^{-1/2}(\xi_{n,0},\xi_{n,1},\ldots,\xi_{n,r})\overset{a.s.}\sim 
(\xi_{0},\xi_{1},\ldots,\xi_{r})
$$
where $\xi_{0}=1/\sqrt{t},$ 
$\xi_{1}=\sqrt{4\textbf{e}_{1}+1/t}-1/\sqrt{t},$ and for $k\ge 2,$
$$
\xi_{k}=\sqrt{4\sum_{\ell=1}^{k}\textbf{e}_{\ell}+1/t}-
\sqrt{4\sum_{\ell=1}^{k-1}\textbf{e}_{\ell}+1/t}
$$
\item[(v)]Results for any 
$\mathrm{PK}_{1/2}(h\cdot f_{1/2})$ distribution are obtained by randomizing $t$ with respect to $h(t)f_{1/2}(t)$ which can be any non-negative distribution.
\end{enumerate}
\end{prop}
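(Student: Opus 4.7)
The plan is to view this proposition as a direct specialization of material already established, chiefly Propositions~\ref{JPPY}, \ref{Browniancase} and \ref{limit}, to the $\alpha=1/2$ case, where the stable density collapses to the explicit inverse-gamma form $f_{1/2}(t)=(2\Gamma(1/2))^{-1}t^{-3/2}e^{-1/4t}$. Nothing deep is required; the whole proof should amount to reading off the Poisson point process description in Proposition~\ref{Browniancase} and then applying the defining recursion $T_{1/2,k-1}=T_{1/2,k}V_k^{-1/\alpha}=T_{1/2,k}V_k^{-2}$ from Proposition~\ref{JPPY}.

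For (i), I invert the conclusion of Proposition~\ref{Browniancase}(iii): taking the reciprocal square root of $T_{1/2,k}=1/(4\sum_{\ell=1}^{k}\mathbf{e}_\ell+1/t)$ gives the stated formula immediately. For (iii), I solve the recursion $V_k^2=T_{1/2,k}/T_{1/2,k-1}$ and substitute (i). For (ii), setting $k=1$ in the recursion gives $V_1^2=T_{1/2,1}/t=1/(4t\mathbf{e}_1+1)$, so $V_1=1/\sqrt{4t\mathbf{e}_1+1}$. The density formula (\ref{denV}) then comes from a single change-of-variables calculation: writing $\mathbf{e}_1=(1-u^2)/(4tu^2)$ and computing $|d\mathbf{e}_1/du|=1/(2tu^3)$, the exponential density $e^{-\mathbf{e}_1}$ for $\mathbf{e}_1$ pushes forward to the stated form. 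One can independently verify this matches plugging $\alpha=1/2$ into (\ref{transitionV2}) of Proposition~\ref{JPPY}, which is a useful consistency check.

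For (iv), I invoke Proposition~\ref{limit}(ii), which is already proved in the general $\mathrm{PK}_\alpha(h\cdot f_\alpha)$ setting. Under the conditioning $S_{1/2,\theta}=t$, we have $S^{-1/2}_{1/2,\theta}=t^{-1/2}$ and $S^{-1/2}_{1/2,\theta+k}$ is replaced by $T^{-1/2}_{1/2,k}$ as described in the remarks preceding the proposition; substituting the explicit representation from (i) produces the telescoping differences stated. For (v), since all of (i)--(iv) hold conditionally on $T_{1/2,0}=t$ with joint densities depending only on $t$, mixing $t$ with respect to any density of the form $h(t)f_{1/2}(t)$ yields the corresponding $\mathrm{PK}_{1/2}(h\cdot f_{1/2})$ statement by Fubini.

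The only place where a reader might want care is the Jacobian in (ii); otherwise the proposition is essentially bookkeeping, since the heavy lifting (the Poisson point process identification and the almost-sure convergence of $n^{-\alpha}K_{n,r}$) has already been performed. Accordingly I do not anticipate any genuine obstacle, only the need to keep track of the index shifts between $T_{1/2,k-1}$ and $T_{1/2,k}$ in the recursion when going from (ii) to (iii).
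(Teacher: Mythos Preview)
Your proposal is correct and aligns with the paper's own treatment: the paper offers no separate proof for this proposition, introducing it simply as ``some implications of Proposition~\ref{Browniancase},'' and your derivation---inverting Proposition~\ref{Browniancase}(iii) for part~(i), reading $V_k$ off the recursion~(\ref{Vequation}) for (ii)--(iii), and invoking Proposition~\ref{limit} for (iv)---is exactly that unpacking. The Jacobian computation you flag in (ii) is correct, and the index bookkeeping is sound.
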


\begin{rem}
Notice that $\xi_{n,0}$ is not fixed, rather $n^{-1/2}\xi_{n,0}\overset{a.s.}\sim 1/\sqrt{t}.$ 
\end{rem}

\begin{rem}
It is now easy to see that the limits for the  M and L preferential attachment models in~\cite{Pek2014} correspond to the case of $\mathrm{PD}(1/2,0)$ and $\mathrm{PD}(1/2,1/2),$ respectively.
\end{rem}
With respect to the calculations in section 4.2, it follows that
$$
\Sigma^{-1}_{1/2,n}(1+b/2)\overset{d}=4G_{n+3/2}B^{2}_{(2+b,2n-b)}\overset{d}=4G_{(b+3)/2}B_{((2+b)/2,(2n-b)/2)}
$$
and 
$$
Y^{-1}_{1/2,n}(b)\overset{d}=B^{2}_{(1,1)}\Sigma^{-1}_{1/2,n}(1+b/2)\overset{d}=B_{(1/2,1)}\Sigma^{-1}_{1/2,n}(1+b/2).
$$
From~\cite[section 3.1]{HJL} we have that 
$\mathbb{G}^{(n,k)}_{1/2}(t)={(\Gamma(n)f_{1/2}(t))}^{-1}\Gamma(k)2^{-k+1}{f}_{1/2,(n,k)}(t),$
is expressible in terms of a  ratio of Meijer G functions as
$$
\frac{G^{0,2}_{2,1} \left(4t
 \left|
\begin{matrix} -\frac{1+k}{2},-\frac{k}{2}
\\ \\-n \end{matrix} \right.
\right)}{G^{0,1}_{1,0} \left(4t
 \left|
\begin{matrix} -\frac{1}{2}
\\ \\\overline{\hspace*{0.2in}} \end{matrix}
\right. \right)}=
\frac{(4t)^{-\frac{1+k}{2}-1}e^{-\frac{1}{4t}}
U\left(-\frac{k}{2}+n, \frac{3}{2}, \frac{1}{4t}\right)}
{(4t)^{-\frac{3}{2}}e^{-\frac{1}{4t}}},
$$
where $U(a,b,c)$ is the confluent hypergeometric function of the
second kind~(see~\cite[p.263]{Lebedev72}). The above ratio reduces
to
$$
2^{-k+1} t^{-\frac{k}{2}+\frac{1}{2}}
U\left(-\frac{k}{2}-\frac{1}{2}+n, \frac{1}{2},
\frac{1}{4t}\right)
$$
via an application of the recurrence
relation~\cite[p.505]{Slater65}
$$
U(a,b,z) = z^{1-b} U(1+a-b,2-b,z).
$$
A change of variable $t = \frac{1}{2} \lambda^{-2}$ yields
the expression $2^{n-k} \lambda^{k-1} h_{k+1-2n}(\lambda)$
inside equation~(110) in Pitman\cite{Pit02}, where
$h_{\nu}(\lambda)$ is the Hermite function of index
$\nu$~\cite[Sec. 10.2]{Lebedev72}, based on the following
relationship,
$$
h_{\nu}(\lambda) = 2^{\nu/2} U\left(-\frac{\nu}{2},
\frac{1}{2}, \frac{\lambda^2}{2}\right).
$$

Hence from Proposition~\ref{mainGibbs} it follows that the joint distribution of $(K_{n,1}=k,V_{1}))|T_{1/2,0}=t,$ can be expressed as, 
$$
\mathbb{P}_{1/2,0}(K_{n}=k)
\frac{t^{-\frac{(k+1)}{2}}\Gamma(n)}{2\Gamma(k)}
U\left(-\frac{k}{2}-\frac{1}{2}+n, \frac{1}{2},
\frac{1}{4tv^{2}}\right).
$$
where, as in Pitman~\cite{Pit02},
$$
\mathbb{P}_{1/2,0}(K_{n}=k)={2n-k-1 \choose n}2^{k+1-2n}.
$$
\section{Nesting across $\alpha$}
Recently, within the context of $\kappa$-stable trees, for $1<\kappa\leq 2,$ Curien and Haas~\cite{CurienHaas}, showed that one can construct  all the stable trees simultaneously as a nested family. In particular for $1<\kappa<\kappa'\leq 2,$ they relate a $\kappa$-stable tree with a $\kappa'$-stable tree rescaled by an independent Mittag-Leffler type distribution. We suspect that hidden in such a story are operations  induced by a $\mathrm{PD}(\alpha,-\alpha\delta)-\mathrm{Frag}$ operator as described in Pitman~\cite[Theorem 12]{Pit99}, for some choice of  $0\leq \delta<1$ and $\theta>-\alpha\delta.$ Rather than pursue that, we describe how one can produce nested $\mathrm{PD}(\alpha\delta,\theta)$ partitions from  $\mathrm{PD}(\alpha,\theta)$ nested schemes via the corresponding
$\mathrm{PD}(\delta,\frac{\theta}{\alpha})-\mathrm{COAG}$ operator. See also \cite[Corollary 10]{Haas2}.
\begin{prop}\label{nestedprop}
Suppose that a sequence of nested local times $(S^{-\alpha}_{\alpha,\theta+r})_{\{r\ge 0\}}$ correspond to the states of a Markov chain $(\mathrm{PD}(\alpha,\theta+r))_{\{r\ge 0\}}$ produced by a $\mathrm{PD}(\alpha,\theta)$  nested CRP scheme as described in section~\ref{nested}.
Let $((Q_{k,r})_{\{k\ge 1\}})_{\{r\ge 0\}}$ denote a nested sequence of mass partitions following precisely a nested sequence of laws 
$(\mathrm{PD}(\delta,\frac{\theta+r}{\alpha}))_{\{r\ge 0\}}$
whose relations will be described more formally below, and where for each fixed $r,$  
$(Q_{k,r})_{\{k\ge 1\}}$ is independent of the corresponding $\mathrm{PD}(\alpha,\theta+r)$ mass partition. Let $P_{\delta,\frac{\theta+r}{\alpha}}$ denote corresponding bridges. A 
$\mathrm{PD}(\alpha\delta,\theta)$  nested sequence of partitions of $[n]$ with $(S^{-\alpha\delta}_{\alpha\delta,\theta+r})_{\{r\ge 0\}},$ corresponding to the states of a Markov Chain $(\mathrm{PD}(\alpha\delta,\theta+r))_{\{r\ge 0\}},$
can be obtained in a distributional sense from the $\mathrm{PD}(\alpha,\theta)$ nested CRP scheme by 
using the coagulation operation of Pitman~\cite{Pit99} encoded in the composition of independent bridges 
$
P_{\alpha\delta,\theta}=P_{\alpha,\theta}\circ P_{\delta,\frac{\theta}{\alpha}},$ where $P_{\delta,\frac{\theta}{\alpha}},$ corresponds to the  $\mathrm{PD}(\delta,\frac{\theta}{\alpha})-\mathrm{COAG}$ operator. More precisely this is encoded in the ordered operation of coagulations described by the relations
$$
P^{-1}_{\alpha\delta,\theta}=P^{-1}_{\delta,\frac{\theta}{\alpha}}\circ P^{-1}_{\alpha,\theta}=
P^{-1}_{\delta,\frac{\theta}{\alpha}}\circ \lambda^{-1}_{\alpha,\theta}\circ\cdots\circ \lambda^{-1}_{\alpha,\theta+r-1}\circ P^{-1}_{\alpha,\theta+r}
$$
where the simple bridge 
$\lambda_{\alpha,\theta}(y):=\lambda_{1}(y),$ as in (\ref{simplebridges}). A scheme that directly relates each pair $(S_{\alpha,\theta+r},S_{\alpha\delta,\theta+r}),$ and hence $(\mathrm{PD}(\alpha,\theta+r), \mathrm{PD}(\alpha\delta,\theta+r)),$ for each step $r,$ can be deduced from the following results. 
\begin{enumerate}
\item[(i)]There is the identity
$$
\lambda_{\alpha,\theta}\circ P_{\delta,\frac{\theta}{\alpha}}(\cdot)=P_{\delta,\frac{1+\theta}{\alpha}}\circ \lambda_{\alpha\delta,\theta}(\cdot)
$$
\item[(ii)]Statement [(i)] means that $\lambda_{\alpha,\theta}\circ P_{\delta,\frac{\theta}{\alpha}}(y)$ is equivalent to the bridge
$$
B_{(\frac{\theta+\alpha\delta}{\alpha},\frac{1-\alpha\delta}{\alpha})}P_{\delta,\frac{\theta}{\alpha}+\delta}(y)+(1-B_{(\frac{\theta+\alpha\delta}{\alpha},\frac{1-\alpha\delta}{\alpha})})\mathbb{I}_{\{U'_{1}\leq y\}}
$$
where $U'_{1}=P^{-1}_{\delta,\frac{\theta}{\alpha}}(\tilde{U}_{1})$ is the Uniform$[0,1]$ variable associated with the first size biased pick from a $\mathrm{PD}(\delta,\theta/\alpha)$ distribution, and is further equivalent to the bridge representation
$$
P_{\delta,\frac{1+\theta}{\alpha}}(
B_{(\frac{\theta+\alpha\delta}{\alpha\delta},\frac{1-\alpha\delta}{\alpha\delta})}\mathbb{U}(y)+(1-B_{(\frac{\theta+\alpha\delta}{\alpha\delta},\frac{1-\alpha\delta}{\alpha\delta})})\mathbb{I}_{\{U'_{1}\leq y\}})=P_{\delta,\frac{1+\theta}{\alpha}}(\lambda_{\alpha\delta,\theta}(y))
$$

\item[(iii)]The equivalences are then encoded by their local times or $\delta$-diversities,
$$
S^{-\delta}_{\delta,\frac{\theta}{\alpha}+\delta}B^{\delta}_{(\frac{\theta+\alpha\delta}{\alpha},\frac{1-\alpha\delta}{\alpha})}=
S^{-\delta}_{\delta,\frac{1+\theta}{\alpha}}B_{(\frac{\theta+\alpha\delta}{\alpha\delta},\frac{1-\alpha\delta}{\alpha\delta})}
$$
\item[(iv)]The identity $P_{\alpha\delta,\theta}(\cdot)=P_{\alpha,\theta}\circ P_{\delta,\frac{\theta}{\alpha}}(\cdot)=P_{\alpha,1+\theta}\circ P_{\delta,\frac{1+\theta}{\alpha}}\circ \lambda_{\alpha\delta,\theta}(\cdot)$ shows that the $\alpha\delta$-diversity of  $P_{\alpha\delta,\theta}$ can be represented as, 
$$
S^{-\alpha\delta}_{\alpha\delta,\theta}=S^{-\alpha\delta}_{\alpha,\theta}S^{-\delta}_{\delta,\frac{\theta}{\alpha}}=S^{-\alpha\delta}_{\alpha,1+\theta}
S^{-\delta}_{\delta,\frac{1+\theta}{\alpha}}B_{(\frac{\theta+\alpha\delta}{\alpha\delta},\frac{1-\alpha\delta}{\alpha\delta})}
=S^{-\alpha\delta}_{\alpha\delta,1+\theta}B_{(\frac{\theta+\alpha\delta}{\alpha\delta},\frac{1-\alpha\delta}{\alpha\delta})}
$$
\item[(v)] Replacing $\theta$ with $\theta+r$ establishes relationships between $(S^{-\alpha}_{\alpha,\theta+r})_{\{r\ge 0\}}$ and $(S^{-\alpha\delta}_{\alpha\delta,\theta+r})_{\{r\ge 0\}}:=(S^{-\alpha\delta}_{\alpha,\theta+r}\times S^{-\delta}_{\delta,\frac{\theta+r}{\alpha}})_{\{r\ge 0\}}$ in the respective nested schemes.
Which is encoded by,
$$
P^{-1}_{\alpha\delta,\theta}=P^{-1}_{\delta,\frac{\theta}{\alpha}}\circ P^{-1}_{\alpha,\theta}=
\lambda^{-1}_{\alpha\delta,\theta}\circ\cdots\circ \lambda^{-1}_{\alpha\delta,\theta+r-1}\circ P^{-1}_{\delta,\frac{\theta+r}{\alpha}}\circ P^{-1}_{\alpha,\theta+r}
$$
where $P^{-1}_{\alpha\delta,\theta+r}=P^{-1}_{\delta,\frac{\theta+r}{\alpha}}\circ P^{-1}_{\alpha,\theta+r},$ and for the simple bridge
$\lambda_{\alpha\delta,\theta+r-1},$ 
$$ 
B_{(\frac{\theta+r-1+\alpha\delta}{\alpha\delta},\frac{1-\alpha\delta}{\alpha\delta})}:=\frac{S^{-\alpha\delta}_{\alpha,\theta+r-1}S^{-\delta}_{\delta,\frac{\theta+r-1}{\alpha}}}{S^{-\alpha\delta}_{\alpha,\theta+r}S^{-\delta}_{\delta,\frac{\theta+r}{\alpha}}}
$$
for $r\ge 1.$ Note,  $P^{-1}_{\delta,\frac{\theta+r-1}{\alpha}}\circ 
\lambda^{-1}_{\alpha,\theta+r-1}
=\lambda^{-1}_{\alpha\delta,\theta+r-1}\circ P^{-1}_{\delta,\frac{\theta+r}{\alpha}},$ 
which follows from [(i)].
\item[(vi)] The relationship between the nested sequence of $\mathrm{COAG}$-operators, 
following  the laws 
$(\mathrm{PD}(\delta,\frac{\theta+r}{\alpha}))_{\{r\ge 0\}}$ is encoded by the local time relations
$$
S^{-\delta}_{\delta,\frac{\theta}{\alpha}}=
S^{-\delta}_{\delta,\frac{1+\theta}{\alpha}}
B^{-\delta}_{(\frac{\theta+\alpha}{\alpha},\frac{1-\alpha}{\alpha})}
B_{(\frac{\theta+\alpha\delta}{\alpha\delta},\frac{1-\alpha\delta}{\alpha\delta})},
$$
substituting generally $\theta$ with $\theta+r-1.$
This follows from[(iv)] by using the identity
$S^{-\alpha\delta}_{\alpha,1+\theta}=S^{-\alpha\delta}_{\alpha,\theta}B^{-\delta}_{(\frac{\theta+\alpha}{\alpha},\frac{1-\alpha}{\alpha})},$ where the variables on the right are not independent. 
\end{enumerate}
\end{prop}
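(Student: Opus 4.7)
The proposition is a structural unfolding, and every identity traces back to Pitman's coagulation--fragmentation duality~\cite{Pit99}, which in bridge form reads $P_{\alpha,\theta}\circ P_{\delta,\theta/\alpha}\overset{d}=P_{\alpha\delta,\theta}$ (with independent factors), together with the accompanying $\alpha\delta$-diversity multiplicativity $S^{-\alpha\delta}_{\alpha\delta,\theta}\overset{d}=S^{-\alpha\delta}_{\alpha,\theta}\,S^{-\delta}_{\delta,\theta/\alpha}$, which follows directly from the definition $S_{\alpha,\theta}=\lim_{i}(i\Gamma(1-\alpha)P_{i})^{-1/\alpha}$ applied to the ranked masses of the coagulated partition. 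The plan is to take these as black-box inputs from~\cite{Pit99,Pit06} and derive the rest by composing with the one-step simple-bridge recursions of~(\ref{simplebridges}) and the Beta-algebra in~(\ref{keyID1})--(\ref{keyID2}).

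The crux is part~[(i)]. I would write $P_{\alpha\delta,\theta}$ in two ways, applying the coagulation identity at both parameter values $\theta$ and $\theta+1$ and unfolding the one-step simple-bridge decompositions $P_{\alpha,\theta}=P_{\alpha,\theta+1}\circ\lambda_{\alpha,\theta}$ and $P_{\alpha\delta,\theta}=P_{\alpha\delta,\theta+1}\circ\lambda_{\alpha\delta,\theta}$:
\[
P_{\alpha,\theta+1}\circ\lambda_{\alpha,\theta}\circ P_{\delta,\theta/\alpha}\;\overset{d}=\;P_{\alpha\delta,\theta}\;\overset{d}=\;P_{\alpha,\theta+1}\circ P_{\delta,(1+\theta)/\alpha}\circ\lambda_{\alpha\delta,\theta}.
\]
On both sides the outer factor $P_{\alpha,\theta+1}$ is independent of the remaining inner bridge, and the canonical decomposition of $P_{\alpha\delta,\theta+1}$ into a $\mathrm{PD}(\alpha,\cdot)$ outer layer and a $\mathrm{PD}(\delta,\cdot)$ inner layer is unique in joint distribution in the Pitman--Yor coagulation framework. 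Cancelling $P_{\alpha,\theta+1}$ then forces the two inner compositions to agree in joint distribution, yielding~[(i)].

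For part~[(ii)] I would expand $\lambda_{\alpha,\theta}\circ P_{\delta,\theta/\alpha}(y)$ using~(\ref{simplebridges}), replacing $\mathbb{I}_{\tilde{U}_1\le P_{\delta,\theta/\alpha}(y)}$ with $\mathbb{I}_{U'_1\le y}$ where $U'_1:=P^{-1}_{\delta,\theta/\alpha}(\tilde{U}_1)$ is the size-biased label, and then apply the size-biased deletion identity $P_{\delta,\theta/\alpha}(y)=Q_{K^\ast}\mathbb{I}_{U'_1\le y}+(1-Q_{K^\ast})P_{\delta,\theta/\alpha+\delta}(y)$, with $Q_{K^\ast}\sim\mathrm{Beta}(1-\delta,\theta/\alpha+\delta)$ independent of the renormalized remainder. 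Collecting terms, the net scaling of the $P_{\delta,\theta/\alpha+\delta}$ component becomes $B_{(\frac{\theta+\alpha}{\alpha},\frac{1-\alpha}{\alpha})}(1-Q_{K^\ast})$, and the Beta-algebra identity $\mathrm{Beta}(a',b')\cdot\mathrm{Beta}(a'+b',b)\overset{d}=\mathrm{Beta}(a',b'+b)$ applied with $a'=(\theta+\alpha\delta)/\alpha$, $b'=1-\delta$, $b=(1-\alpha)/\alpha$ (which works because $a'+b'=(\theta+\alpha)/\alpha$) identifies this product as the $\mathrm{Beta}(\frac{\theta+\alpha\delta}{\alpha},\frac{1-\alpha\delta}{\alpha})$ variable displayed in the first bridge form of~[(ii)]; the second bridge form is then immediate from~[(i)]. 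Part~[(iii)] follows by reading off $\delta$-diversities on both sides of this equality, and parts~[(iv)]--[(v)] by substituting the diversity multiplicativity and the recursions~(\ref{keyID1})--(\ref{keyID2}) at levels $\theta$ and $\theta+1$ and then iterating~[(i)] to telescope across $r$ steps. Part~[(vi)] comes from combining the $\theta+r-1$ and $\theta+r$ instances of~[(iv)] and isolating the $S^{-\delta}_{\delta,\cdot}$ recursion by use of the $S^{-\alpha}_{\alpha,\cdot}$ identity from~(\ref{keyID2}).

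The main obstacle is part~[(vi)]. As the proposition itself flags at its close, the three factors appearing on the right of the $S^{-\delta}_{\delta,\cdot}$ recursion are \emph{not} jointly independent: the Beta factor is defined as a ratio of $\alpha\delta$-diversities that couples the $\mathrm{PD}(\alpha,\cdot)$ and $\mathrm{PD}(\delta,\cdot)$ layers. Verifying that the stated identity is correct as a joint-distributional statement---and not merely a marginal one---requires carefully tracking the dependencies through the coagulation decomposition and amounts to a Beta-gamma identity generalizing the one-step recursion~(\ref{keyID1}). Similar care, though less delicate, is needed to promote~[(iii)] from an equality of marginal laws to the joint equality that drives the iteration in~[(v)].
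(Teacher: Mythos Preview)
Your proposal is correct and follows essentially the same route as the paper's proof: for [(i)] the paper writes $P_{\alpha\delta,\theta}=P_{\alpha,1+\theta}\circ\lambda_{\alpha,\theta}\circ P_{\delta,\theta/\alpha}=P_{\alpha,1+\theta}\circ P_{\delta,(1+\theta)/\alpha}\circ\lambda_{\alpha\delta,\theta}$ and cancels the outer $P_{\alpha,1+\theta}$, and for [(ii)] it carries out exactly your direct expansion via the size-biased stick-breaking $P_{\delta,\theta/\alpha}(y)=W_{1}P_{\delta,\theta/\alpha+\delta}(y)+(1-W_{1})\mathbb{I}_{\{U'_{1}\le y\}}$ and the Beta product $B\cdot W_{1}$ (your $B\cdot(1-Q_{K^\ast})$), appealing to \cite[Proposition~21]{PY97} for the second bridge form. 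The paper then simply states that [(iii)] follows and that [(iv)]--[(vi)] are explained in the statement itself, so your more careful flagging of the dependence issue in [(vi)] is additional caution rather than a departure from the argument.
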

\begin{proof}
Statement [(i)] is a just a consequence of Pitman's coagulation operation combined with the simpler coagulation operator in~\cite{Dong2006} as follows. 
$P_{\alpha,\delta}=P_{\alpha,\theta}\circ P_{\delta,\frac{\theta}{\alpha}}=P_{\alpha,1+\theta}\circ \lambda_{\alpha,\theta}\circ P_{\delta,\frac{\theta}{\alpha}}.$ Also by~\cite{Pit99} $P_{\alpha,1+\theta}\circ P_{\delta,\frac{1+\theta}{\alpha}}=P_{\alpha\delta,1+\theta}.$
So for each $y$ there is the identity of distribution functions 
$$
P_{\alpha\delta,\theta}(y)=P_{\alpha,1+\theta}(\lambda_{\alpha,\theta}( P_{\delta,\frac{\theta}{\alpha}}(y)))=P_{\alpha,1+\theta}(P_{\delta,\frac{1+\theta}{\alpha}}(\lambda_{\alpha\delta,\theta}(y))),
$$
which establishes [(i)]. Although equivalent, statement [(ii)] does not directly appeal to [(i)], and offers a result by direct construction. By definition,  $\lambda_{\alpha,\theta}\circ P_{\delta,\frac{\theta}{\alpha}}(y)$ is given by
$$
B_{(\frac{\theta+\alpha}{\alpha},\frac{1-\alpha}{\alpha})}P_{\delta,\frac{\theta}{\alpha}}(y)+(1-B_{(\frac{\theta+\alpha}{\alpha},\frac{1-\alpha}{\alpha})})\mathbb{I}_{\{P^{-1}_{\delta,\frac{\theta}{\alpha}}(\tilde{U}_{1})\leq y\}}
$$
where $U'_{1}=P^{-1}_{\delta,\frac{\theta}{\alpha}}(\tilde{U}_{1})$ is, as noted, the point associated with the first size biased pick from $\mathrm{PD}(\delta, \frac{\theta}{\alpha}),$ say $1-W_{1}$ following a $\mathrm{Beta}(1-\delta,\frac{\theta}{\alpha}+\delta)$ distribution Use  $U'_{1}=P^{-1}_{\delta,\frac{\theta}{\alpha}}(\tilde{U}_{1}),$ and otherwise replace $P_{\delta,\frac{\theta}{\alpha}}$ with its stick-breaking representation $P_{\delta,\frac{\theta}{\alpha}}(y)=W_{1}P_{\delta,\frac{\theta}{\alpha}+\delta}(y)+(1-W_{1})\mathbb{I}_{\{U'_{1}\leq y\}}.$ Combining terms, one sees that for $B=
B_{(\frac{\theta+\alpha}{\alpha},\frac{1-\alpha}{\alpha})},$ it remains to evaluate the distribution of $BW_{1}$ and $(1-W_{1})B+(1-B)=1-BW_{1}.$ It follows by the usual Beta Gamma algebra that $BW_{1}=B_{{(\frac{\theta+\alpha\delta}{\alpha},\frac{1-\alpha\delta}{\alpha})}}.$ The remaining equivalence is apparent either by direct evalution or an appeal to \cite[Proposition 21]{PY97}. Statement [(iii)] follows  from this and the remaining statements have been explained within the text of the Proposition. 
\end{proof}

We close by noting some connections to the Bolthausen-Sznitman coalescent~\cite{Bolt}.
\begin{cor} Consider the dynamics of the 
Bolthausen-Sznitman coalescent, or $U$-coalescent, as described in for instance \cite[Theorem 14, Corollary 15]{Pit99} or \cite{BerFrag,BerLegall00}. Then setting 
$\alpha={\mbox e}^{-s}, \delta={\mbox e}^{-(t-s)}$ and $\theta=0$ in Proposition~\ref{nestedprop}, establishes various relations between the $U$-coalescent and corresponding nested CRP schemes.
\end{cor}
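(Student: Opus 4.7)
My plan is to treat this as a direct specialization of Proposition~\ref{nestedprop} once the Bolthausen-Sznitman coalescent is identified with the correct Poisson-Dirichlet semigroup. The first step is to recall the characterization of the $U$-coalescent due to Pitman~\cite{Pit99}: its time-$t$ partition of mass has law $\mathrm{PD}(e^{-t},0)$, and for $0\le s<t$ the transition from time $s$ to time $t$ is given by applying the $\mathrm{PD}(e^{-(t-s)},0)-\mathrm{COAG}$ operator to the state at time $s$. With $\alpha=e^{-s}$, $\delta=e^{-(t-s)}$, and $\theta=0$ one has $\alpha\delta=e^{-t}$, so the composition identity $P_{\alpha\delta,0}=P_{\alpha,0}\circ P_{\delta,0}$ from Proposition~\ref{nestedprop} becomes exactly the Chapman-Kolmogorov/semigroup identity $P_{e^{-t},0}=P_{e^{-s},0}\circ P_{e^{-(t-s)},0}$ for the $U$-coalescent bridges.

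The second step is to translate this to the nested CRP side. Given a sequence of times $0<t_{1}<t_{2}<\cdots$, I set $\alpha_{r}=e^{-t_{r}}$ and apply Proposition~\ref{nestedprop} iteratively with the pairs $(\alpha,\delta)=(\alpha_{r},e^{-(t_{r+1}-t_{r})})$ and $\theta=0$. Statement (iv) of that proposition then yields the multiplicative local-time identity $S^{-e^{-t_{r+1}}}_{e^{-t_{r+1}}, 0}=S^{-e^{-t_{r+1}}}_{e^{-t_{r}},0}\cdot S^{-e^{-(t_{r+1}-t_{r})}}_{e^{-(t_{r+1}-t_{r})},0}$, which is precisely the representation of the Mittag-Leffler-type local times driving the $U$-coalescent. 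The ordered composition formula in the body of Proposition~\ref{nestedprop} then shows that a nested CRP scheme for $\mathrm{PD}(e^{-t_{r}},0)$ can be obtained by successively coagulating a nested $\mathrm{PD}(e^{-t_{1}},0)$-CRP using independent $\mathrm{PD}(e^{-(t_{r+1}-t_{r})},0)-\mathrm{COAG}$ bridges, giving the combinatorial realization of the $U$-coalescent on $[n]$.

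The main obstacle, and really the only non-trivial point, is that Proposition~\ref{nestedprop} is stated for integer-indexed Markov chains while the $U$-coalescent is continuous time; so one must verify that consistency of the compositions holds for arbitrary finite refinements of time grids, which is handled by the associativity of Pitman's COAG operator (i.e.\ $\mathrm{PD}(\delta_{1},0)-\mathrm{COAG}\circ \mathrm{PD}(\delta_{2},0)-\mathrm{COAG}=\mathrm{PD}(\delta_{1}\delta_{2},0)-\mathrm{COAG}$, from~\cite[Theorem 12]{Pit99}). Once this is invoked one passes to a continuum of time points by Kolmogorov extension, and all clauses (i)--(vi) of Proposition~\ref{nestedprop} specialize to identities internal to the $U$-coalescent. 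I would finish by pointing out that the particular identities (iii) and (vi) recover the well-known beta-product structure of the size-biased picks in the Bolthausen-Sznitman coalescent, and that clause (v) encodes the refinement structure between the partition at time $t_{r}$ and the partition at time $t_{r+1}$.
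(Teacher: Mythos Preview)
The paper supplies no proof for this corollary: it is stated as an immediate specialization of Proposition~\ref{nestedprop}, the point being only that the Bolthausen--Sznitman coalescent has time-$t$ marginal $\mathrm{PD}(e^{-t},0)$ with transition from $s$ to $t$ given by the $\mathrm{PD}(e^{-(t-s)},0)$--$\mathrm{COAG}$ operator, which is precisely the $\theta=0$, $\alpha=e^{-s}$, $\delta=e^{-(t-s)}$ instance of the proposition. Your first paragraph is exactly this, and that alone already constitutes the full argument the paper intends.

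Your second and third paragraphs go beyond what is asked. In particular, the ``main obstacle'' you raise --- reconciling the integer index $r$ in Proposition~\ref{nestedprop} with the continuous time parameter of the $U$-coalescent --- is not actually an obstacle here, because the corollary does not claim to reconstruct the full continuous-time flow; it only asserts that the substitution yields ``various relations.'' The index $r$ in the proposition refers to the nesting level of the CRP scheme (i.e.\ the $\mathrm{PD}(\alpha,\theta+r)$ chain), which is a separate axis from the $U$-coalescent time parameter encoded in $(\alpha,\delta)$. So your invocation of associativity of $\mathrm{COAG}$ and Kolmogorov extension, while correct and interesting, is supplementary rather than required. What you gain is a cleaner picture of how the nested CRP schemes sit inside the $U$-coalescent along an arbitrary time grid; what the paper gains by omitting it is brevity, since the corollary as written is deliberately open-ended.
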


\begin{rem}This section is partially influenced by recent interactions with Anton Wakolbinger and Martin M\"ohle, whom I thank for their time. One sees that $(S^{e^{-t}}_{(e^{-t},0)},t\ge 0)$ is a version of M\"ohle's~\cite{MohleAlea}
Mittag Leffler process. Setting $(K_{n}(t):=N^{(n)}_{t},t\ge 0)$ to match with the notation in~\cite{MohleAlea}, denoting for fixed $t,$ the number of blocks of a $\mathrm{PD}(e^{-t},0)$ partition of $[n],$ one has for $n\rightarrow \infty,$
$$
\frac{K_{n}(t)}{n^{e^{-t}}}=\frac{N^{(n)}(t)}{n^{e^{-t}}}\overset{a.s.}\sim S^{e^{-t}}_{(e^{-t},0)},$$
as was established in~\cite{MohleAlea}, and otherwise corresponds to results for the $e^{-t}$-diversity.
\end{rem}

\end{document}